\newcommand{\R}{{\mathbb R}}
\theoremstyle{plain}
\newtheorem{theorem}{Theorem}[section]
\newtheorem{corollary}{Corollary}[section]
\newtheorem{proposition}{Proposition}[section]
\theoremstyle{definition}
\newtheorem{definition}{Definition}[section]
\newtheorem{example}{Example}[section]
\newtheorem{remark}{Remark}[section]
\numberwithin{equation}{section}
\begin{document}
\title{Approximation of rough functions}
\author[M. F. Barnsley]{M. F. Barnsley}
\address{Australian National University }
\author[B. Harding]{B. Harding}
\address{Australian National University }
\author[A. Vince]{A. Vince}
\address{University of Florida, Australian National University}
\author[P. Viswanathan]{P. Viswanathan}
\address{Australian National University }

\begin{abstract}
For given $p\in\lbrack1,\infty]$ and $g\in L^{p}\mathbb{(R)}$, we establish
the existence and uniqueness of solutions $f\in L^{p}(\mathbb{R)}$, to the
equation
\[
f(x)-af(bx)=g(x),
\]
where $a\in\mathbb{R}$, $b\in\mathbb{R} \setminus \{0\}$, and $\left\vert
a\right\vert \neq\left\vert b\right\vert ^{1/p}$. Solutions include well-known
nowhere differentiable functions such as those of Bolzano, Weierstrass, Hardy,
and many others. Connections and consequences in the theory of fractal
interpolation, approximation theory, and Fourier analysis are established.

\end{abstract}
\maketitle

\section{Introduction}

The subject of this paper, in broad terms, is fractal analysis. More
specifically, it concerns a constellation of ideas centered around the single
unifying functional equation~\eqref{eqref} below. In practice, the given
function $g(x)$ may be smooth and the solution $f(x)$ is often rough,
possessing fractal features. Classical notions from interpolation and
approximation theory are extrapolated, via this equation, to the fractal
realm, the basic goal being the utilization of fractal functions to analyze
real world rough data.

For given $p\in\lbrack1,\infty]$ and $g:\R\rightarrow \R$ with $g\in L^{p}\mathbb{(R)}$, we establish
the existence and uniqueness of solutions $f\in L^{p}(\mathbb{R)}$, to the
equation
\begin{equation}
f(x)-af(bx)=g(x),\label{eqref}
\end{equation}
where $a\in\mathbb{R}$,  $b\in\mathbb{R} \setminus \{0\}$, and $\left\vert
a\right\vert \neq\left\vert b\right\vert ^{1/p}$. By uniqueness we mean that
any solution is equal to $f$ almost everywhere in $\mathbb{R}$. When $a$, $b$
and $g$ are chosen appropriately, solutions include the classical nowhere
differentiable functions of Bolzano, Weierstrass, Hardy, Takagi, and others;
see the reviews \cite{baranski2, hunt}. For example, the continuous, nowhere
differentiable function presented by Weierstrass in 1872 to the Berlin
Academy, defined by
\begin{equation}
f(x)=\sum_{k=0}^{\infty}a^{k}\cos\,(\pi b^{k}x),\label{eq:W}
\end{equation}
where $0<a<1$, $b$ is an integer, and $ab\geq1+\frac{3}{2}\pi$ (see
\cite{Hardy}), is a solution to the functional equation~\eqref{eqref} when
$g(x)=\cos(\pi x)$. The graph of $f$ was studied as a fractal curve in the
plane by Besicovitch and Ursell \cite{BU}. An elementary and readable account
of the history of nowhere differentiable functions is \cite{thim}; it includes
the construction by Bolzano (1830) of one of the earliest examples of such a function.
Analytic solutions to the functional equation~\eqref{eqref} for various values of $a$ and $b$,
when $g$ is analytic, have been studied by Fatou in connection with Julia sets \cite{fatou,Read}.

If $\left\vert ab\right\vert >1$, $b>1$ is an integer, and $g$ has certain
properties, see \cite{baranski2, hunt}, then the graph of $f$, restricted to
$[0,1]$, has box-counting (Minkowski) dimension
\[
D=2+\frac{\ln\left\vert a\right\vert }{\ln b}\text{.}
\]
In particular, if $g(x)=\cos(\pi x)$, then by a recent result of
B\'{a}r\'{a}ny, Romanowska, and Bara\'{n}ski \cite{baranski} the Hausdorff
dimension of the graph of $f$ is $D$, for a large set of values of $\left\vert
a\right\vert <1$.

Notation that is used in this paper is set in Section~\ref{sec:notation}. In
Section~\ref{sec:key} we establish existence and uniqueness of solutions to
Equation~\eqref{eqref} in various function spaces (see Theorem~\ref{thm:key}, Corollaries~\ref{keycor} and \ref{cor:key},
Proposition~\ref{prop:spaces}).
Although the emphasis has been on the pathology of the solution to the functional
equation~\eqref{eqref}, it is shown that, if $g$ is continuous, then the solution $f$ is continuous
(see Corollaries~\ref{ctycor} and \ref{cor:cont}).

A widely used method for constructing fractal sets, in say ${\mathbb{R}}^{2}$, is as
the attractor of an iterated function system (IFS). Indeed, starting in the
mid 1980's, IFS fractal attractors $A$ were systematically constructed so that
$A$ is the graph of a function $f\,:J\rightarrow{\mathbb{R}}$, where $J$ is a
closed bounded interval on the real line \cite{Barnsley1}.  Moreover $f$ can be
made to interpolate a data set of the form $(x_{0},y_{0}),(x_{1},y_{1}),\dots,(x_{N},y_{N})$,
where $x_{0}<x_{1}<\cdots<x_{N}$ and $J=[x_{0},x_{N}]$.
The book \cite{Mas} is a reference on such fractal interpolation functions
constructed via an IFS.  One of the appeals of the theory is that it is
possible to control the box-counting dimension and smoothness of the graphs of
such functions.  The solutions to the functional equation~\eqref{eqref} include,
not only the classical nowhere differentiable functions, but also  fractal interpolation
functions. This is the subject of  Section~\ref{sec:interpolation}, in particular Theorems~\ref{thm:interp1} and \ref{thm:interp2}.
One impetus for the research reported here is the work on
fractal interpolation by Massopust \cite{Mas}, Navascu\`{e}s \cite{Nav1, Nav2,
manuvascues1}, and Chand and his students \cite{CK}.

In Section~\ref{sec:ON}, Equation~\eqref{eqref},
and the theory surrounding it are leveraged to obtain orthogonal expansions
--that we call Weierstrass Fourier series-- and corresponding approximants,
for various functions, both smooth and rough, using approximants with
specified Minkowski and even Hausdorff dimension.

Some ideas in the present work are anticipated, at least in flavor, in Deliu
and Wingren \cite{deliu1} and Kigami and his collaborators
\cite{kigami2,yamaguti}. But, as far as we know, our main observations,
 namely Theorem~\ref{thm:key} and its corollaries, Theorems~\ref{thm:interp1} and \ref{thm:interp2}, and Theorem~\ref{thm:ONbasis}, are new.

\section{Notation}

\label{sec:notation}

For $p\in\lbrack1,\infty),$ $L^{p}(X)$ denotes the Banach space of functions
$f:X\rightarrow\mathbb{R}$ such that
\[
\int_{X}|f(x)|^{p}dx<\infty\text{,}
\]
where the integration is with respect to Lebesgue measure on $X$. In this
paper, $X$ will be ${\mathbb{R}}$, a closed interval of ${\mathbb{R}}$, or an
interval of the form $[c,\infty),\,(-\infty,c]$ or $(-\infty,c]\cup\lbrack
c^{\prime},\infty$). The space $L^{\infty}(X)$ denotes the Banach space of
functions $f:X\rightarrow\mathbb{R}$ such that the essential supremum of
$\left\vert f\right\vert $ is bounded. For all $p\in\lbrack0,\infty]$, the
norm of $f\in$ $L^{p}(X)$ is denoted $\left\Vert f\right\Vert _{p}$, where
\begin{align*}
\left\Vert f\right\Vert _{p}  &  =\left\{  \int_{X}|f(x)|^{p}dx\right\}
^{1/p}\text{ when }p\in\lbrack1,\infty),\\
\left\Vert f\right\Vert _{\infty}  &  =\inf\{M\in\lbrack0,\infty):\left\vert
f(x)\right\vert \leq M\text{ for almost all }x\in X\}.
\end{align*}

The norm of a bounded linear operator $H:L^{p}(X)\rightarrow L^{p}(X)$ is
defined by
\[
\left\Vert H\right\Vert _{p}=\max\{\left\Vert Hf\right\Vert _{p}:\left\Vert
f\right\Vert _{p}=1\}.
\]
 The space of bounded uniformly continuous
real valued functions with the supremum norm is denoted $C(X)$.
Further let
\[C^{k}(X) = \{ f \, : \, f^{(j)} \in C(X), \, j=0,1,\dots, k\}.\]
For a bounded
continuous function $f$ and $\alpha\in(0,1]$, let
\[
\lbrack f]_{\alpha}=\sup_{x,y\in X,~x\neq y}\frac{|f(x)-f(y)|}{|x-y|^{\alpha}
}.
\]
For $k\in\mathbb{N}\cup\{0\}$, the H\"older space
\[
C^{k,\alpha}({\mathbb{R}}):=\{ \, f\in C(\R) \, : \, f^{(j)}\in C({\mathbb{R)}}, \,
j=0,1,..,k, \, \Vert f\Vert_{C^{k,\alpha}}<\infty\}.
\]
where
\[
\Vert f\Vert_{C^{k,\alpha}}:=\sum_{j=0}^{k}\Vert f^{(j)}\Vert_{\infty
}+[f]_{\alpha}
\]
is a Banach space.

Let $k \ge1$ be an integer and $f \in L_{loc}^{1}({\mathbb{R}})$, the space of
all locally integrable functions. A function $g\in L_{loc}^{1}({\mathbb{R}})$
is a weak-derivative of $f$ of order $k$ if
\[
\int_{{\mathbb{R}}} g(x) \phi(x) dx = (-1)^{k} \int_{{\mathbb{R}}} f(x)
\phi^{k} (x) dx
\]
for all $\phi\in\mathcal{C}_{c}^{\infty}({\mathbb{R}})$, where $\mathcal{C}
_{c}^{\infty}({\mathbb{R}})$ is the space of continuous functions with compact
support, having continuous derivatives of every order.

For $1\leq p\leq\infty$ and $k\in\mathbb{N}\cup\{0\}$, let $W^{k,p}
(\mathbb{R)}$ denote the usual Sobolev space. That is,
\[
f\in W^{k,p}(\mathbb{R)}\Longleftrightarrow f^{(j)}\in L^{p}(\mathbb{R}
),~j=0,1,\dots,k,
\]
where $f^{(j)}$ denotes the $j$-th weak or distributional derivative of $f$.
The space $W^{k,p}({\mathbb{R}})$ endowed with the norm
\[
\Vert f\Vert_{W^{k,p}}:=\left\Vert f^{(k)}\right\Vert _{p}+\left\Vert
f\right\Vert _{p}
\]
is a Banach space.

Consider the difference operator
\[
\Delta_{h} f(x) = f(x-h)-f(x)
\]
and define the modulus of continuity by
\[
\omega_{p}^{2} (f,t) = \sup_{|h| \le t} \Vert\Delta_{h}^{2} f \Vert_{p}.
\]
For $n \in\mathbb{N}\cup\{0\}$, $s=n+ \alpha$, $0 < \alpha\le1$ and $1\le
p,q\le\infty$, the Besov space $B_{p,q}^{s} ({\mathbb{R}})$ consists of all
functions $f$ such that
\[
f \in W^{n,p} ({\mathbb{R}}), ~ \int_{0}^{\infty}\Big \vert \frac{w_{p}
^{2}(f^{(n)},t)}{t^{\alpha}}\Big \vert^{q} \frac{dt}{t} < \infty.
\]
The functional
\[
\Vert f\Vert_{B_{p,q}^{s}} := \Big[\Vert f\Vert_{W^{n,p}}^{q} + \int
_{0}^{\infty}\Big \vert \frac{w_{p}^{2}(f^{(n)},t)}{t^{\alpha}}\Big \vert^{q}
\frac{dt}{t}\Big]^{\frac{1}{q}}
\]
is a norm which turns $B_{p,q}^{s} ({\mathbb{R}})$ into a Banach space.

\section{Key Theorem}

\label{sec:key}

The functional equation~(\ref{eqref}) can be expressed as
\[ M_{a,b} \, f = g,\] where the linear operator $M_{a,b}$ is defined as follows.

\begin{definition}
\label{def:operators} For all $p\in\lbrack1,\infty],\,a,b\in{\mathbb{R}
},\,b\neq0$, the linear operators $T_{b}:L^{p}(\mathbb{R)\rightarrow}
L^{p}(\mathbb{R)}$ and $M_{a,b}:L^{p}(\mathbb{R)\rightarrow}L^{p}(\mathbb{R)}$
are given by
\[
\begin{aligned}
(T_{b}f)(x)&=f(bx) \\M_{a,b}f &=(I-aT_{b})f
\end{aligned}
\]
for all $x\in\mathbb{R}$ and all $f\in L^{p}({\mathbb{R}})$. By convention, if
$p=\infty$ and $b\neq0$, then $|b|^{\frac{1}{p}}=1$.
\end{definition}

\begin{theorem}
\label{thm:key} For all $p\in\lbrack1,\infty], \, a, b \in{\mathbb{R}}, \,
b\neq0$, and $\left\vert a\right\vert \neq\left\vert b\right\vert ^{\frac
{1}{p}}$, the linear operators $T=T_{b}$ and $M = M_{a,b}$ are homeomorphisms
from $L^{p}(\mathbb{R)}$ to itself. In particular,

\begin{enumerate}
\item
\[
T_{b}^{-1}=T_{1/b}
\]

\item
\[
\left\Vert T_{b}\right\Vert _{p}={\left\vert b\right\vert ^{-\frac{1}{p}} }
\]

\item
\[
\left|  1-\frac{\left\vert a\right\vert }{\left\vert b\right\vert ^{\frac
{1}{p}} } \right|  \left\Vert f \right\Vert _{p} \, \leq\, \left\Vert M_{a,b}f
\right\Vert _{p} \, \leq\, \left(  1+\frac{\left\vert a\right\vert
}{\left\vert b\right\vert ^{\frac{1}{p}}} \right)  \left\Vert f \right\Vert
_{p}
\]

\item
\[
{M_{a,b}}^{-1} =
\begin{cases}
\quad\sum_{n=0}^{\infty}\, a^{n}\, T_{b}^{n} & \text{ if } \; |a|<\left\vert
b\right\vert ^{\frac{1}{p}},\\
& \\
-\sum_{n=1}^{\infty} \, \left(  \frac{1}{a}\right)  ^{n}\,T_{1/b}^{n} & \text{
if } \; |a|>\left\vert b \right\vert ^{\frac{1}{p}}.
\end{cases}
\]

\end{enumerate}
\end{theorem}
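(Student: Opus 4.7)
The plan is to work through the four claims in sequence: (1) and (2) by a direct change of variables, (3) by the triangle and reverse triangle inequalities, and (4) via a Neumann series argument that splits into two cases according to whether $|a|<|b|^{1/p}$ or $|a|>|b|^{1/p}$.

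For (1) and (2), I would substitute $y=bx$ in the integral defining $\|T_b f\|_p^p$ when $p\in[1,\infty)$, obtaining $\|T_b f\|_p = |b|^{-1/p}\|f\|_p$ exactly. This simultaneously shows $T_b$ is bounded, identifies its operator norm as $|b|^{-1/p}$, and shows it is (up to the scalar $|b|^{-1/p}$) an isometry onto $L^p(\mathbb{R})$; the identity $T_b T_{1/b} = T_{1/b} T_b = I$ is pointwise trivial. For $p=\infty$ one verifies directly that $\|T_b f\|_\infty = \|f\|_\infty$, consistent with the stated convention $|b|^{1/\infty}=1$.

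Claim (3) is then immediate from (2). The upper bound is the triangle inequality $\|(I-aT_b)f\|_p \le \|f\|_p + |a|\,\|T_b f\|_p = \bigl(1+|a|/|b|^{1/p}\bigr)\|f\|_p$, and the lower bound is the reverse triangle inequality $\|(I-aT_b)f\|_p \ge \bigl|\|f\|_p - |a|\,\|T_b f\|_p\bigr| = \bigl|1-|a|/|b|^{1/p}\bigr|\,\|f\|_p$. Under the hypothesis $|a|\neq|b|^{1/p}$ the lower constant is strictly positive, so $M_{a,b}$ is injective with closed range, and any bounded inverse produced below will automatically be continuous.

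For (4) I split into cases. When $|a|<|b|^{1/p}$, one has $\|aT_b\|_p = |a|/|b|^{1/p}<1$, so the Neumann series $\sum_{n=0}^\infty a^n T_b^n$ converges absolutely in operator norm; the finite identity $(I-aT_b)\sum_{n=0}^{N} a^n T_b^n = I - a^{N+1}T_b^{N+1}$ (and its mirror) passes to the limit and yields $M_{a,b}^{-1} = \sum_{n=0}^\infty a^n T_b^n$. When $|a|>|b|^{1/p}$, the direct series diverges, and the key manoeuvre is to factor $M_{a,b} = I-aT_b = -aT_b\bigl(I - a^{-1}T_{1/b}\bigr)$ using $T_b^{-1}=T_{1/b}$ from (1); since $\|a^{-1}T_{1/b}\|_p = |b|^{1/p}/|a|<1$, the Neumann series $\sum_{n=0}^\infty a^{-n} T_{1/b}^n$ inverts the second factor, and combining it with $(-aT_b)^{-1} = -a^{-1}T_{1/b}$ and shifting the index yields the stated formula $M_{a,b}^{-1} = -\sum_{n=1}^\infty a^{-n} T_{1/b}^n$. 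The existence of this bounded two-sided inverse upgrades the injectivity from (3) to the full homeomorphism claim. The main conceptual obstacle is exactly this second case: one must recognize that, although the naive geometric expansion diverges, inverting the dominant dilation factor reduces the problem to the convergent regime already handled.
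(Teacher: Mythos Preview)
Your proposal is correct and follows essentially the same route as the paper: change of variables for (1)--(2), triangle and reverse triangle inequalities for (3), and a geometric/Neumann series for (4), with the second case handled by pulling out the invertible factor $-aT_b$ so that the remaining operator $I-a^{-1}T_{1/b}$ has small perturbation norm. The paper's proof is slightly more bare-hands (it verifies the series equations directly rather than invoking the Neumann series as a named device, and it leaves the second case of (4) to ``a similar derivation''), but the underlying ideas are identical.
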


\begin{proof}
It is readily verified that $T$ is invertible with inverse $T_{b}^{-1}
=T_{1/b}$ and that the formula (2) for the $p$-norm of $T_{b}$ holds.
Consequently
\[
\left\Vert T_{b}^{-1}\right\Vert _{p}=\left\vert b\right\vert ^{\frac{1}{p}
}\text{.}
\]
To show that $M$ is injective, assume that $Mf=0,$ i.e., $\left\Vert
f-aT_{b}f\right\Vert _{p}=0$. If $\left\vert a\right\vert <\left\vert
b\right\vert ^{\frac{1}{p}}$, then
\begin{align*}
0  &  =\left\Vert f-aT_{b}f\right\Vert _{p}\geq\left\Vert f\right\Vert
_{p}-\left\vert a\right\vert \left\Vert T_{b}\right\Vert _{p}\left\Vert
f\right\Vert _{p}=(1-\left\vert a\right\vert \left\Vert T_{b}\right\Vert
_{p})\left\Vert f\right\Vert _{p}\\
&  =\left(  1-\left\vert a\right\vert \left\vert b\right\vert ^{-\frac{1}{p}
}\right)  \left\Vert f\right\Vert _{p}\geq0\text{,}
\end{align*}
which implies that $\left\Vert f\right\Vert _{p}=0$. Similarly, if $\left\vert
a\right\vert >\left\vert b\right\vert ^{\frac{1}{p}}$, then $a\neq0$, and
using the invertibility of $T_{b}$, we have that $Mf=0$ if and only if
$a^{-1}\,(T_{b})^{-1}f=f$, which implies
\begin{align*}
0  &  =\left\Vert a^{-1}T_{b}^{-1}f-f\right\Vert _{p}\geq(1-\left\vert
a\right\vert ^{-1}||T_{b}^{-1}||)\Vert f\Vert_{p}\\
&  =(1-\left\vert a\right\vert ^{-1}\left\vert b\right\vert ^{\frac{1}{p}
})\left\Vert f\right\Vert _{p}\geq0\text{,}
\end{align*}
which in turn implies that $\left\Vert f\right\Vert _{p}=0$. Inequality (3)
follows from (2) and the triangle inequality.

To show that $M$ is surjective and that a solution to $Mf=g$ in $L^{p}
(\mathbb{R})$ is
\[
f=
\begin{cases}
\quad\sum_{n=0}^{\infty}\,a^{n}T_{b}^{n}\,g\quad\qquad\text{if}\quad
|a|<\left\vert b\right\vert ^{\frac{1}{p}},\\
\\
-\sum_{n=1}^{\infty}\, \left(  \frac{1}{a}\right)  ^{n}\,T_{1/b}^{n}
\,g\quad\text{if}\quad|a|>\left\vert b\right\vert ^{\frac{1}{p}},
\end{cases}
\]
first note that both series are absolutely and uniformly convergent in
$L^{p}(\mathbb{R)}$. This is because their partial sums are Cauchy sequences.
Now, using the continuity of $M:L^{p}(\mathbb{R)\rightarrow}L^{p}(\mathbb{R)}$
and equality (2) in the statement of the theorem, we have, in the case
$|a|<\left\vert b\right\vert ^{\frac{1}{p}}$,
\[
M\left(  \sum_{n=0}^{\infty}\,a^{n}T_{b}^{n}\,g\right)  =\lim_{k\rightarrow
\infty}\left(  \sum_{n=0}^{k}\,a^{n}MT_{b}^{n}\,g\right)  =\lim_{k\rightarrow
\infty}(I-a^{k+1}T_{b}^{k+1})g=g.
\]
A similar derivation holds in the case $|a|>\left\vert b\right\vert ^{\frac
{1}{p}}$.
\end{proof}

The next corollary on existence and uniqueness of solutions to
Equation~\eqref{eqref} follows at once from Theorem~\ref{thm:key}.

\begin{corollary}
\label{keycor} Assume that $a, b\in\mathbb{R}, \, b\neq0$, and $\left\vert
a\right\vert \neq\left\vert b\right\vert ^{\frac{1}{p}}$. For any $g\in
L^{p}(\mathbb{R)}, \, p\in\lbrack1,\infty]$, there is a unique solution $f\in
L^{p}(\mathbb{R)}$ to the equation
\[
f(x)-af(bx)=g(x),
\]
and the solution is given by the following series that are absolutely and
uniformly convergent in $L^{p}({\mathbb{R}})$:
\begin{equation}
\label{eq:sum}f(x)=
\begin{cases}
\quad\sum_{n=0}^{\infty}\,a^{n}\,g(b^{n}x)\qquad\text{if}\quad|a|<\left\vert
b\right\vert ^{\frac{1}{p}}\\
\\
-\sum_{n=1}^{\infty}\,\left(  \frac{1}{a})^{n}\,g(\frac{x}{b^{n}} \right)
\quad\text{if}\quad|a|>\left\vert b\right\vert ^{\frac{1}{p}}.
\end{cases}
\end{equation}

\end{corollary}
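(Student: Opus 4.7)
The plan is to translate Corollary \ref{keycor} directly into the operator language of Theorem \ref{thm:key} and then read the answer off from the explicit inverse formula. First, observe that the equation $f(x)-af(bx)=g(x)$ is precisely $M_{a,b}f=g$, with $M_{a,b}$ as in Definition \ref{def:operators}. Since Theorem \ref{thm:key} asserts that $M_{a,b}$ is a homeomorphism of $L^{p}(\mathbb{R})$ whenever $|a|\neq|b|^{1/p}$, both existence and uniqueness of $f\in L^{p}(\mathbb{R})$ are immediate: take $f=M_{a,b}^{-1}g$, and any other $L^{p}$ solution must coincide with this one almost everywhere.

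Next I would substitute the two Neumann-type series from part (4) of Theorem \ref{thm:key}. A one-line induction on the definition $(T_{b}f)(x)=f(bx)$ shows that $(T_{b}^{n}g)(x)=g(b^{n}x)$ and $(T_{1/b}^{n}g)(x)=g(x/b^{n})$, which converts the operator series
\[ \sum_{n=0}^{\infty}a^{n}T_{b}^{n}\,g \qquad \text{and} \qquad -\sum_{n=1}^{\infty}\Bigl(\tfrac{1}{a}\Bigr)^{n}T_{1/b}^{n}\,g \]
into the pointwise formulas displayed in \eqref{eq:sum}, in the cases $|a|<|b|^{1/p}$ and $|a|>|b|^{1/p}$ respectively.

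The absolute and uniform convergence in $L^{p}(\mathbb{R})$ was already established inside the proof of Theorem \ref{thm:key}: in the first case the partial sums are Cauchy because $\|a^{n}T_{b}^{n}g\|_{p}=\bigl(|a|/|b|^{1/p}\bigr)^{n}\|g\|_{p}$ is the $n$-th term of a convergent geometric series, and a symmetric estimate using $\|T_{1/b}\|_{p}=|b|^{1/p}$ handles the second. In short, there is really no obstacle here — every ingredient of the corollary is already packaged inside Theorem \ref{thm:key}. The only things worth being careful about are keeping the two cases $|a|<|b|^{1/p}$ and $|a|>|b|^{1/p}$ separate, and noting that the geometric ratio is strictly less than one precisely because of the hypothesis $|a|\neq|b|^{1/p}$.
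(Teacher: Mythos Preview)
Your proposal is correct and matches the paper's approach exactly: the paper simply states that the corollary ``follows at once from Theorem~\ref{thm:key}'' without giving a separate proof, and what you have written is precisely the unpacking of that remark. The only content is recognizing $f(x)-af(bx)=g(x)$ as $M_{a,b}f=g$, invoking the homeomorphism and inverse formula from Theorem~\ref{thm:key}, and rewriting $T_b^n g$ and $T_{1/b}^n g$ pointwise, all of which you do correctly.
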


\begin{remark}
Recall that the adjoint of a bounded linear operator $A: X \to Y$ is the
operator $A^{\ast}: Y^{\ast} \to X^{\ast}$ defined by $(A^{\ast}\mu) (x)=
\mu(Ax)$ for all $\mu\in Y^{\ast}$ and $x \in X$, where $X^{\ast}$ denotes the
dual space of $X$. For $1 \le p < \infty$ there is a canonical isomorphism
between $L^{p}({\mathbb{R}})^{*}$ and $L^{q} ({\mathbb{R}})$, where $\frac
{1}{p}+\frac{1}{q}=1$. For each linear functional $\mu\in L^{p} ({\mathbb{R}
})^{\ast}$, this isomorphism associates a unique representative $g \in L^{q}
({\mathbb{R}})$ such that $\mu(f) = \int_{{\mathbb{R}}} f(x) g(x)~dx$ for all
$f\in L^{p}({\mathbb{R}})$. It is routine to show that
\[
T_{b}^{\ast}=\frac1b \, T_{\frac1b}
\]
in the sense that, if the representative of $\mu\in L^{p} ({\mathbb{R}}
)^{\ast}$ in the space $L^{q}({\mathbb{R}})$ is $g$, then the representative
of $T_{b}^{\ast} \mu$ is $b^{-1}T_{b^{-1}} g$. Similarly
\[
M_{a,b}^{\ast}=M_{\frac{a}{b}, \frac{1}{b}}.
\]

\end{remark}

\begin{remark}
\label{zerorem} If $b=0,$ then Equation~\eqref{eqref} has solution
\[
f(x)=g(x)+\frac{a}{1-a}\,g(0).
\]
So, for all $a\neq1$, there is a well-defined solution $f(x)$ for all
$x\in\mathbb{R}$, for each specified value of $g(0)$.  Since, as a element of
$L^{p}(\mathbb{R)}$, the function $g$ is defined only up to a set of measure $0$, the value $g(0)$ has little meaning.
Thus it does not make sense to consider Equation~\eqref{eqref} in $L^{p}
(\mathbb{R)}$ when $b=0$. However, the problem of finding $f$ for a given $g$ is
well-posed in spaces such as $C(\mathbb{R)}$, even when $b=0$.
\end{remark}

In view of Remark \ref{zerorem},\textit{ }except where otherwise stated, it is
assumed throughout this paper that $b\neq0$
and $\left\vert a\right\vert\neq\left\vert b\right\vert ^{\frac{1}{p}}$.
The results in Theorem~\ref{thm:key} and its Corollary~\ref{keycor} hold for
various spaces related to the $L^{p}$-spaces. Corollaries~\ref{ctycor},
\ref{cor:cont}, and \ref{cor:key} below concern these related spaces.

\begin{corollary}
\label{ctycor} Assume that $a, b\in\mathbb{R}, \, b\neq0$, and $\left\vert
a\right\vert \neq1$. For any $g\in C({\mathbb{R}})$, there is a unique
solution $f\in C({\mathbb{R}})$ to the equation
\[
f(x)-af(bx)=g(x),
\]
and the solution is given by the following series that are absolutely and
uniformly convergent:
\[
f(x)=
\begin{cases}
\quad\sum_{n=0}^{\infty}\,a^{n}\,g(b^{n}x) & \text{if}\quad|a|<1\\
& \\
-\sum_{n=1}^{\infty}\,\left(  \frac{1}{a})^{n}\,g(\frac{x}{b^{n}} \right)  &
\text{if}\quad|a|>1.
\end{cases}
\]

\end{corollary}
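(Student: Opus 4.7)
The plan is to derive this from Theorem~\ref{thm:key} applied with $p=\infty$. Since Section~\ref{sec:notation} defines $C(\mathbb{R})$ to be the space of bounded uniformly continuous real-valued functions under the supremum norm, we are dealing with a closed subspace of $L^{\infty}(\mathbb{R})$. The convention $|b|^{1/\infty}=1$ reconciles the hypothesis $|a|\neq 1$ with the hypothesis $|a|\neq|b|^{1/p}$ for $p=\infty$, so Theorem~\ref{thm:key} and Corollary~\ref{keycor} are immediately applicable, and the two series displayed in the statement coincide with those of Corollary~\ref{keycor}.

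For existence I would take as candidate $f$ the explicit series from Corollary~\ref{keycor} at $p=\infty$. In the case $|a|<1$ the Weierstrass $M$-test applied to $\sum_{n=0}^{\infty}a^{n}g(b^{n}x)$ with majorant $|a|^{n}\|g\|_{\infty}$ gives absolute and uniform convergence on $\mathbb{R}$; the case $|a|>1$ is symmetric with majorant $|a|^{-n}\|g\|_{\infty}$. Each partial sum is a finite linear combination of functions of the form $x\mapsto g(b^{n}x)$, and composition with the dilation $x\mapsto b^{n}x$ preserves both boundedness and uniform continuity of $g$. A standard three-$\varepsilon$ argument shows that a uniform limit of bounded uniformly continuous functions is itself bounded and uniformly continuous, so the limit $f$ lies in $C(\mathbb{R})$. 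Applying $M_{a,b}$ term-by-term, which is legitimate because inequality~(3) of Theorem~\ref{thm:key} makes $M_{a,b}$ continuous in the sup norm, then yields $M_{a,b}f=g$.

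For uniqueness, suppose $f_{1},f_{2}\in C(\mathbb{R})$ both solve the equation. As bounded uniformly continuous functions they lie in $L^{\infty}(\mathbb{R})$ and both satisfy $M_{a,b}f=g$ there, so the injectivity of $M_{a,b}$ on $L^{\infty}(\mathbb{R})$ furnished by Theorem~\ref{thm:key} forces $f_{1}=f_{2}$ almost everywhere on $\mathbb{R}$. Two continuous functions that agree almost everywhere must agree pointwise, so $f_{1}\equiv f_{2}$.

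The only point requiring care is the preservation of \emph{uniform} (rather than merely pointwise) continuity when passing to the limit of the series; this is the routine three-$\varepsilon$ estimate noted above and poses no real obstacle. Beyond that, the entire argument is a transfer from the $L^{\infty}$ theory of Theorem~\ref{thm:key} to the closed subspace $C(\mathbb{R})$, with the $M$-test supplying boundedness, continuity, and validity of term-by-term application of $M_{a,b}$ simultaneously.
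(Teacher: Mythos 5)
Your proof is correct and takes essentially the same route as the paper: the paper likewise establishes absolute and uniform convergence of the series via the geometric tail bound $\frac{|a|^{M}}{1-|a|}\Vert g\Vert_{\infty}$, concludes continuity of the uniform limit, verifies the functional equation by substitution, and defers to Corollary~\ref{keycor} for the $L^{\infty}$ framework. Your explicit handling of uniqueness (injectivity of $M_{a,b}$ on $L^{\infty}(\mathbb{R})$ gives equality a.e., and continuity upgrades this to equality everywhere) simply spells out what the paper leaves implicit in its reference to Corollary~\ref{keycor}.
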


\begin{proof}
If $\left\vert a\right\vert <1$, then $\left\vert \sum_{n=M}^{\infty}
\,a^{n}\,g(b^{n}x)\right\vert <\frac{\left\vert a\right\vert ^{M}
}{1-\left\vert a\right\vert }\left\Vert g\right\Vert _{\infty}$. Therefore $$\lim_{M \to \infty} \sup_{x \in \mathbb{R}} \left\vert \sum_{n=M}^{\infty}
\,a^{n}\,g(b^{n}x)\right\vert=0,$$ which implies $\sum_{n=0}^{\infty}\,a^{n}\,g(b^{n}x)$ is absolutely and uniformly convergent. Since $g \in C(\mathbb{R})$, it follows that the infinite sum is a
continuous function. That the series is a solution of the functional equation can be verified at once by substitution, see also Corollary \ref{keycor}. A similar argument applies in the case $\left\vert
a\right\vert >1$.
\end{proof}

The following relationships between the continuity of $f$ and the continuity
of $g$ follow as in Corollary~\ref{ctycor}.

\begin{corollary} \label{cor:cont}
For the equation $M_{a,b}\,f=g$ in $L^{\infty}({\mathbb{R}})$, if $|a|<1$,
then the following hold.

\begin{enumerate}
\item If $b >0$, then $f \in C([0, \infty))$ if and only if $g \in C([0,
\infty))$.

\item If $b \ge1$, then $f \in C([1, \infty))$ if and only if $g \in C([1,
\infty))$.

\item If $0<b\leq1$, then $f\in C([0,1])$ if and only if $g\in C([0,1])$.
\end{enumerate}
\end{corollary}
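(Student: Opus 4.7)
The plan is to leverage the series representation from Corollary \ref{ctycor}, which applies here because $|a|<1=|b|^{1/\infty}$ under the stated convention. Since $g\in L^{\infty}(\mathbb{R})$, the Weierstrass M-test with $M_n=|a|^n\,\|g\|_{\infty}$ gives that the series
\[
f(x)=\sum_{n=0}^{\infty} a^{n}\,g(b^{n}x)
\]
converges absolutely and uniformly on all of $\mathbb{R}$. So the key extra ingredient beyond Corollary \ref{ctycor} is purely geometric: identifying, for each of the three intervals $I$ appearing in (1)--(3), the invariance $b^{n}I\subseteq I$ for all $n\geq 0$.

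The main step for the \textbf{if} direction is: for $x\in I$ with $b^n x\in I$, the function $x\mapsto g(b^nx)$ is a composition of a continuous linear map $x\mapsto b^nx$ (taking $I$ into $I$) with $g|_{I}\in C(I)$, so each term is in $C(I)$, and uniform convergence promotes this to $f|_{I}\in C(I)$. The three invariance checks are immediate: in (1), $b>0$ and $x\geq 0$ yield $b^{n}x\geq 0$; in (2), $b\geq 1$ and $x\geq 1$ yield $b^{n}x\geq 1$; in (3), $0<b\leq 1$ and $x\in[0,1]$ yield $b^{n}x\in[0,1]$.

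The \textbf{only if} direction is even easier, using the functional equation in the form $g(x)=f(x)-af(bx)$. I need that $x\in I$ implies $bx\in I$, so that $f(bx)$ is continuous in $x\in I$ whenever $f\in C(I)$. This is the $n=1$ instance of the invariances already verified above, and it holds in all three cases.

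The only obstacle worth naming is the boundary-point issue at $x=0$ in (1) and (3) and at $x=1$ in (2): strictly speaking one must check that continuity of $g$ on the closed half-line (or interval) $I$ implies continuity of $x\mapsto g(b^{n}x)$ as a function on $I$ at these boundary points, and likewise in the converse direction. This reduces to the standard fact that the composition of continuous maps between subsets of $\mathbb{R}$ is continuous, where continuity on a closed endpoint is interpreted as one-sided continuity within $I$. No further calculation is needed.
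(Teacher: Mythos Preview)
Your proposal is correct and follows essentially the same route as the paper, which merely states that the result ``follows as in Corollary~\ref{ctycor}'' without further elaboration. You have supplied precisely the extra ingredient the paper leaves implicit: the invariance $b^{n}I\subseteq I$ in each of the three cases, together with the explicit treatment of the ``only if'' direction via $g(x)=f(x)-af(bx)$ and $bI\subseteq I$.
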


A similar set of statements hold when $C(X)$ is replaced by $C^{\prime}(X)$,
the set of functions in $C(X)$ with countably many discontinuities.

\begin{remark}
If, in Corollary \ref{ctycor}, $g\in L^{\infty}(\mathbb{R)}$ is assumed
piecewise continuous with countably many points of discontinuity, rather than
continuous, then it follows by a similar argument that the solution $f\in
L^{\infty}(\mathbb{R)}$ to $Mf=g$ is piecewise continuous with at most
countably many points of discontinuity.
\end{remark}

\begin{remark}
Examples related to fractal interpolation (see Example \ref{anexample}) show
that $f=M^{-1}g$  may be continuous on $[0,1]$ even if $g$  possess discontinuities.
\end{remark}

Unlike continuity, it is well-known from basic real
analysis that $f  = M_{a,b}^{-1}g$ may fail to be differentiable even if $g$ is
differentiable.  Vice versa, when $|a| >1$ and $g$ is continuous, $f$  may be more
differentiable than $g$.  Thus, in a general sense, for $|a|<1$, the mapping $M_{a,b}^{-1}$ is a ``roughing''
operation, and for $|a| >1$, it is a ``smoothing" operation.

The following estimate is worth mentioning.

\begin{proposition} \label{prop:spaces}
Consider the equation $M_{a,b}\,f=g$ for $f\in C([0,\infty))$, $|a|<1$ and
$b>0$. Then the uniform distance between $f$ and $g$ satisfies
\[
\Vert g-f\Vert_{\infty} =
\Vert g-M_{a,b}^{-1}g\Vert_{\infty}\leq\frac{|a|}{1-|a|}\Vert g\Vert_{\infty
},
\]
Consequently
\[
\Vert I-M_{a,b}^{-1}\Vert_{\infty}\leq\frac{|a|}{1-|a|}.
\]
\end{proposition}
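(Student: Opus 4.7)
The plan is to apply the series representation from Corollary~\ref{ctycor} (which carries over to $C([0,\infty))$ when $b>0$, since then $b^n x \geq 0$ whenever $x \geq 0$, and one can likewise verify uniform convergence on $[0,\infty)$ directly using $\|g\|_\infty$). Since $|a|<1$, the solution is
\[
f(x) = \sum_{n=0}^{\infty} a^{n}\,g(b^{n}x),
\]
with the series converging absolutely and uniformly on $[0,\infty)$.

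Isolating the $n=0$ term gives the pointwise identity
\[
g(x) - f(x) = -\sum_{n=1}^{\infty} a^{n}\,g(b^{n}x).
\]
Applying the triangle inequality and the crude bound $|g(b^n x)| \leq \|g\|_\infty$ (which holds for almost every $x$, hence uniformly once we have continuity), I would estimate
\[
|g(x)-f(x)| \;\leq\; \sum_{n=1}^{\infty} |a|^{n}\,\|g\|_{\infty} \;=\; \frac{|a|}{1-|a|}\,\|g\|_{\infty}.
\]
Taking the supremum over $x \in [0,\infty)$ yields the first assertion.

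For the second assertion, observe that by Corollary~\ref{ctycor} the operator $M_{a,b}^{-1}$ sends $g$ to $f$, so $(I - M_{a,b}^{-1})g = g - f$. The first inequality therefore gives $\|(I-M_{a,b}^{-1})g\|_\infty \leq \tfrac{|a|}{1-|a|}\|g\|_\infty$ for every $g$, and taking the supremum over unit-norm $g$ yields the operator-norm bound.

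There is no real obstacle here: the argument is essentially a geometric-series estimate applied to the Neumann-type expansion of $M_{a,b}^{-1}$ already established in Theorem~\ref{thm:key}(4). The only mild care required is to note that the series formula and its uniform convergence transfer from $C(\mathbb{R})$ to $C([0,\infty))$ when $b>0$, which is immediate since the dilation $x \mapsto b^n x$ maps $[0,\infty)$ into itself, and the supremum estimate on the tail of the series uses only $\|g\|_\infty$, not the global domain.
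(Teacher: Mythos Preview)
Your proof is correct and follows essentially the same approach as the paper: both use the series representation $f(x)=\sum_{n\ge 0}a^n g(b^n x)$, cancel the $n=0$ term, and bound the remaining tail by a geometric series in $|a|$. Your write-up is in fact slightly more detailed than the paper's, since you explicitly justify the transfer of the series formula to $C([0,\infty))$ and spell out the passage to the operator-norm bound.
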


\begin{proof}
Note that
\[
\begin{split}
\big|g(x)-M_{a,b}^{-1} g(x) \big| =  &  ~ \big |g(x)- \sum_{n=0}^{\infty}a^{n}
g(b^{n} x)\big|\\
\le &  ~ \sum_{n=1}^{\infty}|a|^{n} \|g\|_{\infty}\\
=  &  ~ \frac{|a|}{1-|a|} \|g\|_{\infty}.
\end{split}
\]
Therefore $\|g- f\|_{\infty}\le\frac{|a|}{1-|a|} \|g\|_{\infty}$,
proving the assertion.
\end{proof}

The term automorphism in the next corollary refers to a linear map that is a homeomorphism of a space to itself.  In pariticular, statement (6)
in the corollary is used in Section~\ref{sec:ON}.

\begin{corollary}
\label{cor:key} If $M=M_{a,b}$ is the operator of
Definition~\ref{def:operators}, with $|a|\neq|b|^{\frac{1}{p}}$, then $M$ is a
automorphism when considered as a mapping on

\begin{enumerate}
\item $L^{p}([0,\infty))$ or $L^{p}((-\infty,0])$ if $b>0$;

\item $L^{p}([1,\infty))$ or $L^{p}((-\infty,1])$ if $b>1$;

\item $L^{p}((-\infty,-1]\cup\lbrack1,\infty))$ if $|b|>1$;

\item $L^{p}([0,1])$ or $L^{p}([-1,0])$ if $0<b<1$;

\item $L^{p}([-1,1])$ if $0 < |b|\leq1$;

\item $L^{\infty}({\mathbb{R}})\,\cap\,L^{2}([0,1])\,\cap\,\mathcal{P}$ if
$b\in\mathbb{N},|a|<1$, where $\mathcal{P}$ is the set of functions on
${\mathbb{R}}$ of period $1$. This is the set of essentially bounded periodic
functions on ${\mathbb{R}}$ that are square integrable when restricted to
$[0,1]$. This is with the understanding that, for $g\in L^{\infty}
({\mathbb{R}})\,\cap\,L^{2}([0,1])\,\cap\,\mathcal{P}$, a representative of
$M^{-1}g$ can be chosen to be periodic of period $1$.
\end{enumerate}
\end{corollary}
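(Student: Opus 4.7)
The plan is to mimic the proof of Theorem~\ref{thm:key} inside each of the listed subspaces. The key observation is that in every case the dilation $T_b$ (and, where applicable, $T_{1/b}$) restricts to a bounded operator on the subspace with the same operator norm $|b|^{-1/p}$ that it has on $L^p(\mathbb{R})$, and this is all that is needed for the Neumann-series construction of $M_{a,b}^{-1}$ from Theorem~\ref{thm:key}(4) to go through unchanged.

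For cases (1)--(5), I would first verify that $bX\subseteq X$ under the stated hypothesis, so that $T_bf(x)=f(bx)$ is well-defined on $L^p(X)$. This holds because $b>0$ gives $b[0,\infty)=[0,\infty)$; $b>1$ gives $b[1,\infty)\subseteq[1,\infty)$; $|b|>1$ gives $b\bigl((-\infty,-1]\cup[1,\infty)\bigr)\subseteq(-\infty,-1]\cup[1,\infty)$; $0<b<1$ gives $b[0,1]\subseteq[0,1]$; and $0<|b|\leq 1$ gives $b[-1,1]\subseteq[-1,1]$. The change-of-variables calculation of Theorem~\ref{thm:key}(2), namely $\|T_bf\|_p^p=|b|^{-1}\int_{bX}|f(u)|^p\,du$, is entirely local and gives $\|T_b\|_p=|b|^{-1/p}$ on $L^p(X)$, with the supremum approached by functions supported in $bX$. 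In case (1) one additionally has $X/b\subseteq X$, so $T_{1/b}$ acts boundedly on $L^p([0,\infty))$ as well. With these facts in place the proof of Theorem~\ref{thm:key} replays verbatim on $L^p(X)$: the injectivity lower bound from (3), and (for $|a|<|b|^{1/p}$) absolute convergence of $\sum_{n\geq 0}a^nT_b^n$ in operator norm to a bounded inverse of $M_{a,b}$. In case (1) the complementary regime $|a|>|b|^{1/p}$ is covered symmetrically by $-\sum_{n\geq 1}a^{-n}T_{1/b}^n$, since $T_{1/b}$ preserves the subspace there as well.

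Case (6) needs two small additions. First, $T_b$ preserves $1$-periodicity when $b\in\mathbb{N}$, because $T_bf(x+1)=f(bx+b)=f(bx)=T_bf(x)$. Second, for $f\in L^2([0,1])\cap\mathcal{P}$, the substitution $u=bx$ combined with the $b$-fold tiling of $[0,b]$ by translates of $[0,1]$ gives $\|T_bf\|_{L^2([0,1])}=\|f\|_{L^2([0,1])}$, while $\|T_b\|_\infty\leq 1$ is immediate. Because $|a|<1$, the Neumann series $\sum a^nT_b^n$ converges absolutely in both the $L^\infty(\mathbb{R})$ and the $L^2([0,1])$ norm, hence in the intersection (a Banach space under the maximum of the two). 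Each partial sum is $1$-periodic, and uniform convergence preserves periodicity, so the limit admits a representative in $\mathcal{P}$.

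The subtle point I expect to be the main obstacle is the regime $|a|>|b|^{1/p}$ in cases (2)--(5): there $T_{1/b}$ does \emph{not} send $X$ into itself (for example $1/b\notin[1,\infty)$ when $b>1$), so the second series of Theorem~\ref{thm:key}(4) is not directly available on the restricted space. Dealing with this regime either requires tightening the hypothesis to $|a|<|b|^{1/p}$, or constructing $M_{a,b}^{-1}$ by hand via a shell-by-shell recursion on the pieces $b^nX\setminus b^{n+1}X$ whose convergence depends precisely on $|a|>|b|^{1/p}$. This is the only part of the argument that genuinely departs from a verbatim repetition of the proof of Theorem~\ref{thm:key}.
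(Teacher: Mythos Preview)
Your route differs from the paper's. For part (1) the paper does not re-run the Neumann series; instead it writes $L^p(\mathbb R)=L_+\oplus L_-$ with $L_\pm$ the functions vanishing on the opposite half-line, observes that for $b>0$ the operator $M$ preserves each summand, and then uses the elementary fact that a bijection of a direct sum which respects the decomposition restricts to a bijection on each summand. This handles both regimes $|a|\lessgtr |b|^{1/p}$ in one stroke, without recomputing $\|T_b\|_p$ or invoking either series. Your argument for (1) is also correct (both $T_b$ and $T_{1/b}$ preserve $[0,\infty)$ when $b>0$), just longer; and your treatment of (6) is correct and more explicit than the paper's one-line ``similar''.

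The subtlety you flag in (2)--(5) for the regime $|a|>|b|^{1/p}$ is real, but your proposed shell-by-shell recursion cannot rescue it: in that regime $M_{a,b}$ is genuinely \emph{not} injective on the listed spaces, so no construction of an inverse can succeed. For instance, in (2) with $b>1$ and $a>b^{1/p}$, put $s=\log_b a>1/p$ and $f(x)=x^{-s}$ on $[1,\infty)$; then $f\in L^p([1,\infty))$ while
\[
(M_{a,b}f)(x)=x^{-s}-a\,(bx)^{-s}=(1-ab^{-s})\,x^{-s}=0.
\]
Powers of $|x|$ give analogous kernel elements in (3)--(5). The paper's proof for these parts is only the phrase ``similar, some using Corollary~\ref{keycor}'' and does not address this regime either; the statement really needs the extra hypothesis $|a|<|b|^{1/p}$ in (2)--(5) (note that in (6) the analogous condition $|a|<1$ \emph{is} stated explicitly). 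Under that hypothesis your first-series argument is exactly what is required.
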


\begin{proof}
(1) The space $L^{p}({\mathbb{R}})$ is the direct sum of two subspaces $L_{+}$
and $L_{-}$, the first consisting of functions which vanish over the negative
reals and the second consisting of functions which vanish over the positive
reals. Since each of these two subspaces is mapped into itself by $M$ and
since $M$ is bijective on $L^{p}(\mathbb{R)}$, it follows that $M$ restricted
to $L_{+}$ and $M$ restricted to $L_{-}$ are both bijective. The proofs of
(2)-(6) are similar, some using Corollary~\ref{keycor}.
\end{proof}

For appropriate values of $a$ and $b$, the operator $M_{a,b}$ also defines an
automorphism in some standard spaces of smooth functions that occur frequently
in various fields of analysis such as approximation theory, numerical
analysis, functional analysis, harmonic analysis, and in particular in
connection with PDEs. The proof is similar to that of Theorem \ref{thm:key},
and hence is omitted.

\begin{proposition}
For the operator $M_{a,b}$ specified in Definition~\ref{def:operators} the
following properties hold.

\begin{enumerate}
\item If $|a| < \min\big\{|b|^{\frac{1}{p}}, |b|^{\frac{1}{p}-k}\big\}$ or
$|a| > \max\big\{|b|^{\frac{1}{p}}, |b|^{\frac{1}{p}-k}\big\}$, then $M_{a,b}$
is an automorphism on Sobolev space $W^{k,p}({\mathbb{R}})$.

\item If $|a| < \min\big\{1, |b|^{-1}, |b|^{-2}, \dots, |b|^{-k},
|b|^{-\alpha} \big\} $ or $|a| > \max\big\{1, |b|^{-1}, |b|^{-2}, \dots,
|b|^{-k}, |b|^{-\alpha} \big\} $, then $M_{a,b}$ is an automorphism on
H\"{o}lder space $C^{k,\alpha}({\mathbb{R}})$.

\item If $|a| < \min\big\{|b|^{\frac{1}{p}}, |b|^{\frac{1}{p}-n},
|b|^{\frac{1}{p}-n-\alpha} \big\}$ or $|a| > \max\big\{|b|^{\frac{1}{p}},
|b|^{\frac{1}{p}-n}, |b|^{\frac{1}{p}-n-\alpha} \big\}$, then $M_{a,b}$ is an
automorphism on Besov space $B_{p,q}^{s} ({\mathbb{R}})$.
\end{enumerate}

In all the above cases ${M_{a,b}}^{-1} = \sum_{n=0}^{\infty}\, a^{n}\,
T_{b}^{n}$ for the first set of admissible values of parameters $a,b$ and
${M_{a,b}}^{-1} =-\sum_{n=1}^{\infty} \, \left(  \frac{1}{a}\right)
^{n}\,T_{1/b}^{n}$ for the second set of admissible values of parameters $a,b$.
\end{proposition}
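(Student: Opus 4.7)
The plan is to replicate the Neumann series construction of Theorem~\ref{thm:key} in each of the three Banach spaces. In every case the entire proof reduces to one computation: bound the operator norm of $T_b$ on that space, and verify that the hypotheses on $|a|$ imply that either $|a|\,\|T_b\| < 1$ (so the series $\sum_{n\ge 0}a^{n}T_{b}^{n}$ converges absolutely in the operator norm of the space) or $|a^{-1}|\,\|T_{1/b}\| < 1$ (so the series $-\sum_{n\ge 1}a^{-n}T_{1/b}^{n}$ does). Once convergence is in hand, absorbing the limit into the continuous operator $M_{a,b}=I-aT_{b}$ exactly as in Theorem~\ref{thm:key} shows the sum equals $M_{a,b}^{-1}$, and the fact that $M_{a,b}$ leaves the space invariant (because $T_b$ does) finishes the proof.

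For (1), the chain rule gives $(T_{b}f)^{(j)}(x)=b^{j}f^{(j)}(bx)$, and part~(2) of Theorem~\ref{thm:key} applied to $f^{(j)}$ in place of $f$ yields $\|(T_{b}f)^{(j)}\|_{p}=|b|^{\,j-1/p}\|f^{(j)}\|_{p}$. Summing over $j=0$ and $j=k$ in the $W^{k,p}$ norm gives $\|T_{b}\|_{W^{k,p}}\le \max\{|b|^{-1/p},|b|^{k-1/p}\}$, whose reciprocal is precisely $\min\{|b|^{1/p},|b|^{1/p-k}\}$. The same computation for $T_{1/b}=T_{b}^{-1}$ produces the complementary "large~$|a|$" range. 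For (2), replace $\|\cdot\|_{p}$ by $\|\cdot\|_{\infty}$ (so the $|b|^{-1/p}$ factors drop), and use the one additional identity $[T_{b}f]_{\alpha}=|b|^{\alpha}[f]_{\alpha}$, which is immediate from the definition of $[\cdot]_{\alpha}$. This produces $\|T_{b}\|_{C^{k,\alpha}}\le\max\{1,|b|,\dots,|b|^{k},|b|^{\alpha}\}$, whose inverse is the stated min.

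For (3), the $W^{n,p}$ part of the Besov norm is handled exactly as in (1). The Besov seminorm part requires one additional calculation: from $\Delta_{h}\bigl(f(b\cdot)\bigr)(x)=(\Delta_{bh}f)(bx)$ and the chain rule, one obtains
\[
\|\Delta_{h}^{2}(T_{b}f)^{(n)}\|_{p}=|b|^{n-1/p}\,\|\Delta_{bh}^{2}f^{(n)}\|_{p},
\]
hence the modulus of continuity scales as $\omega_{p}^{2}((T_{b}f)^{(n)},t)=|b|^{n-1/p}\,\omega_{p}^{2}(f^{(n)},|b|t)$. The substitution $u=|b|t$ in the outer Besov integral (the measure $dt/t$ is scale invariant) contributes a further factor $|b|^{\alpha q}$, giving total scaling $|b|^{n+\alpha-1/p}$ for the seminorm part. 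Combining gives $\|T_{b}\|_{B_{p,q}^{s}}\le \max\{|b|^{-1/p},|b|^{n-1/p},|b|^{n+\alpha-1/p}\}$, and the three-way minimum in the hypothesis is exactly what forces $|a|\|T_{b}\|<1$.

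The only real obstacle is (3): one must keep straight four different sources of factors of $|b|$ (the $n$-th derivative, the rescaling of the $L^{p}$ norm, the commutation of $\Delta_{h}$ with the dilation $T_{b}$, and the change of variable in $dt/t$), and verify they combine so that the three exponents $-1/p$, $n-1/p$, $n+\alpha-1/p$ are all that appears. Once this bookkeeping is done, the Neumann series argument is formally identical to the one in Theorem~\ref{thm:key}, which is why the authors omit the proof.
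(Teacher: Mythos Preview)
Your proposal is correct and is exactly the argument the paper has in mind: the authors explicitly say the proof is ``similar to that of Theorem~\ref{thm:key}, and hence is omitted,'' and you have filled in precisely the Neumann series template, with the only new work being the operator-norm bounds $\|T_b\|$ on $W^{k,p}$, $C^{k,\alpha}$, and $B^{s}_{p,q}$, which you compute correctly (including the scaling of the Besov seminorm via the commutation $\Delta_h(T_bf)=(\Delta_{bh}f)(b\cdot)$ and the scale invariance of $dt/t$). One tiny caveat: in part~(1) the chain rule $(T_bf)^{(j)}=b^{j}T_b(f^{(j)})$ is being applied to weak derivatives, so strictly speaking you should justify it via the defining integration-by-parts identity and a change of variable---a one-line verification.
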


\begin{remark}
A straightforward but useful consequence of the fact that $M_{a,b}^{-1}$ is
an automorphism on various spaces is the following. It is well known that
Schauder bases are preserved under an isomorphism. Consequently, if
$\{f_{n}\}_{n=1}^{\infty}$ is a Schauder basis for $X$, where $X$ is one of
the spaces $L^{p}({\mathbb{R}})$, $W^{k,p}({\mathbb{R}})$, $C^{k,\alpha
}({\mathbb{R}})$ or $B_{p,q}^{s}({\mathbb{R}})$, then $\{M_{a,b}^{-1}
f_{n}\}_{n=1}^{\infty}$ is a Schauder basis consisting of rough analogues of
the functions $\{f_{n}\}_{n=1}^{\infty}$. In particular, if $\{f_{n}
\}_{n=1}^{\infty}$ is an orthonormal basis for the Hilbert space
$L^{2}({\mathbb{R}})$ or $W^{k,2}({\mathbb{R}})$, then $\{M_{a,b}^{-1}
f_{n}\}_{n=1}^{\infty}$ is a Riesz basis for $L^{2}({\mathbb{R}})$ or
$W^{k,2}({\mathbb{R}})$. Some orthonormal bases consisting of rough functions
obtained via $M_{a,b}^{-1}$ are discussed in detail in Section \ref{sec:ON}.
\end{remark}

\section{Fractal Interpolation}

\label{sec:interpolation}

To illustrate how standard fractal interpolation theory fits into the functional equation
framework, consider a given set of data points $\left\{  (x_{n},y_{n}
)\right\}  _{n=0}^{N}\subset\mathbb{R}^{2},\,N>1,$  with $0 = x_0 <x_1 <x_2 \cdots < x_N =1$.
 At minimum what one
seeks is a function $f:[0,1]\rightarrow\mathbb{R},$ such that

\begin{enumerate}
\item $f$ interpolates the data, i.e., $f(x_{n}) = y_{n}, \, n= 0,1,\dots, N$;

\item there is an IFS $F = ({\mathbb{R}}^{2}; w_{1}, w_{2}, \dots, w_{N})$
whose attractor is the graph of the function $f$ on the interval $[0,1]$;

\item parameters can be varied to control continuity and differentiability of
$f$ and the Minkowski dimension of the graph of $f$.
\end{enumerate}

The IFS maps $w_n, \, n=1,2,\dots, N,$  that are studied extensively in fractal interpolation theory \cite{Barnsley1} are of the
form
\begin{equation} \label{eq:I1} w_n(x,y) = \big( L_n(x), F_n(x,y) \big),\end{equation}
where
\begin{equation} \label{eq:I2}  L_n(x) = a_nx+b_n, \qquad F_n(x,y) = \alpha_ny + g_n(x),\end{equation}
$|\alpha_n| <1$; $g_n \, : \, [0,1]\rightarrow \R$ is continuous; and
 \begin{equation} \label{eq:I3} \begin{aligned}
L_n(x_0) &=x_{n-1}, \\ L_n(x_N)&=x_n, \end{aligned} \qquad \qquad \begin{aligned} F_n(x_0,y_0)&=y_{n-1}, \\ F_n(x_N,y_N)&=y_n,
\end{aligned} \end{equation}
for all $n=1,2,\dots, N$.  In this case  there is a unique attractor of $F$, and it is the graph of a continuous  function $f$ that
interpolates the data \cite{Barnsley1}. The parameters $\alpha_n$ and $g_n$ can be varied to control continuity and differentiability of
$f$ and the Minkowski dimension of the graph of $f$.

We  specialize to
the uniform partition of $[0,1]$ and a constant scaling factor, i.e.,
\begin{equation} \label{eq:I4} L_n(x) = \frac{x+n-1}{N}, \qquad \qquad \alpha_n = a, \, |a| < 1, \end{equation}
 for all $n = 1,2,\dots, N$.
\vskip 2mm

The next two theorems make precise the close relationship between fractal interpolation functions and
solutions to the \textquotedblleft Weierstrass-type" functional equation.

\begin{theorem}  \label{thm:interp1}
  Given data points $\left\{  (x_{n},y_{n})\right\}  _{n=0}^{N}
\subset\mathbb{R}^{2},\,N>1,$ let $F$ be the IFS defined by the Equations~(\ref{eq:I1})-(\ref{eq:I4}),
and let $f^*$ be the function on $[0,1]$ whose graph is the attractor of $F$ and that interpolates the data.

Then $f^*$ is the unique solution to the functional equation $f(x)-af(Nx)=g(x)$ considered in the space $L^{\infty}(\R) \cap L^2([0,1]) \cap \mathcal P$ of
Corollary~\ref{cor:key}, where
\[ g(x) = \begin{cases}  g_n(L_n^{-1}(x)) & \text{if} \quad x \in [x_{n-1},x_n), \, n=1,2, \dots, N, \\
g_N(1) & \text{if} \quad x = 1,\\
g(x-1) & \text{if} \quad x \in (1,\infty). \end{cases}\]
\end{theorem}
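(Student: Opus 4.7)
My approach is to extend $f^*$ periodically to all of $\mathbb{R}$, verify the functional equation on the fundamental interval $[0,1)$ by unwinding the self-similarity of the IFS attractor, and then invoke the uniqueness part of Corollary~\ref{cor:key}(6) to complete the identification.

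First I would set $\tilde f(x):=f^*(x-\lfloor x\rfloor)$ for $x\in\mathbb{R}$. Since standard fractal interpolation theory guarantees $f^*$ is continuous (hence bounded) on $[0,1]$, the extension $\tilde f$ lies in $L^{\infty}(\mathbb{R})\cap L^{2}([0,1])\cap\mathcal{P}$. The prescribed $g$ is bounded, measurable (with at most $N+1$ points of discontinuity in a period), and period-$1$, so it lies in the same space. Since $|a|<1$ and $b=N\in\mathbb{N}$, Corollary~\ref{cor:key}(6) applies and yields a \emph{unique} solution of $f-aT_{N}f=g$ in this space, so it remains only to verify that $\tilde f$ is such a solution.

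The core step is to show $\tilde f(x)-a\tilde f(Nx)=g(x)$ for $x\in[0,1)$. The self-similarity identity $\mathrm{graph}(f^*)=\bigcup_{n=1}^{N}w_{n}(\mathrm{graph}(f^*))$ implies that for $x\in[x_{n-1},x_{n}]$ one has
\[
f^*(x)=F_{n}\bigl(L_{n}^{-1}(x),f^*(L_{n}^{-1}(x))\bigr)=a\,f^*(L_{n}^{-1}(x))+g_{n}(L_{n}^{-1}(x)).
\]
Under the uniform partition, $L_{n}^{-1}(x)=Nx-(n-1)$, and for $x\in[x_{n-1},x_{n})$ this lies in $[0,1)$. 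Crucially, because $N$ is an integer, periodicity gives $\tilde f(Nx)=\tilde f(Nx-(n-1))=f^*(L_{n}^{-1}(x))$. Combining these and recalling the definition of $g$ on $[x_{n-1},x_{n})$ yields $\tilde f(x)=a\,\tilde f(Nx)+g(x)$ on each subinterval. Since these subintervals cover $[0,1)$, and since $N\in\mathbb{N}$ makes $T_{N}$ preserve period-$1$ functions (both sides of the equation have period $1$), the identity then extends to all of $\mathbb{R}$.

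I expect the only subtlety to be bookkeeping at the breakpoints $x=x_{n}$ and $x=1$, where the piecewise definitions of $\tilde f$ and $g$ are glued: one must observe that this finite (hence measure-zero) set of points is immaterial in $L^{\infty}(\mathbb{R})\cap L^{2}([0,1])\cap\mathcal{P}$, which is exactly why the theorem is stated modulo representatives in that space. Granted this, uniqueness in Corollary~\ref{cor:key}(6) identifies $\tilde f$ with $M_{a,N}^{-1}g$, and restricting to $[0,1]$ recovers $f^*$, completing the proof.
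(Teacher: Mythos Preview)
Your proposal is correct and follows essentially the same approach as the paper: both arguments use the self-similarity of the attractor to obtain $f^*(x)=a\,f^*(L_n^{-1}(x))+g_n(L_n^{-1}(x))$ on each subinterval, compute $L_n^{-1}(x)=Nx-(n-1)$, and then use the period-$1$ structure of the ambient space to replace $f(Nx-(n-1))$ by $f(Nx)$. Your write-up is in fact more careful than the paper's (which is quite terse), making explicit the periodic extension, the membership of $\tilde f$ and $g$ in $L^{\infty}(\mathbb{R})\cap L^{2}([0,1])\cap\mathcal{P}$, the endpoint bookkeeping, and the appeal to uniqueness from Corollary~\ref{cor:key}(6).
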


\begin{proof}
It follows immediately from the fact that the attractor of the IFS with functions as in Equation~\eqref{eq:I1} that 
\[ \begin{aligned}
\left \{ (x,f(x)) \, : \, x \in [0,1] \right \} &= \bigcup_{n=1}^N \left \{ (L_n(x), a f(x) + g_n(x) \, : \, x\in [0,1] \right \}  \\
&= \bigcup_{n=1}^N \left \{ (x, a  f(L_n^{-1}(x)) + g_n(L_n^{-1}(x)) \, : \, x\in \left [\frac{N-1}{n},\frac{n}{N}\right ] \right \}.
\end{aligned} \]
This implies, for $x \in [(n-1)/N, n/N], n = 1,2, \dots, N$ and in the space $L^{\infty}(\R) \cap L^2([0,1]) \cap \mathcal P$, that 
\[f(x) = a f(Nx - (n-1)) + g_n(L_n^{-1}(x)) = a f(Nx) + g(x).\]
\end{proof}

Let  $g \in L^{\infty}(\R) \cap L^2([0,1]) \cap \mathcal P$, the space of statement (6) Corollary~\ref{cor:key},
and assume that $g$ has  the following properties
\begin{enumerate}
\item  $g$ is continuous on the intervals $[x_0,x_1], (x_1,x_2], \dots , (x_{N-1}, x_N]$,
\item  the limit from the right $g(\frac{n}{N}+)$ exists for $n= 1, \dots, N-1$.
\end{enumerate}

\begin{theorem}  \label{thm:interp2}
Let $f$ be the unique solution to the functional equation $f(x) - af(Nx) = g(x)$
considered in the space $L^{\infty}(\R) \cap L^2([0,1]) \cap \mathcal P$ of Corollary~\ref{cor:key}, where
$g$ has properties (1-2) above.

Then $f$ interpolates the data $\{(x_n,y_n), n=0, 1,2,\dots, N\}$, where $x_n = n/N$ and
\[ \begin{aligned}
y_0 &= g(0)/(1-a) \\
y_N &= g(1)/(1-a) \\
y_n &= g(x_n) + \frac{a}{1-a}\, g(1), \; n=1,2, \dots, N-1.
\end{aligned} \]
Moreover, the closure of the graph of $f$ restricted to the domain $[0,1]$ is the unique attractor of the IFS
 $W=([0,1]\times\mathbb{R};\,w_{1} ,w_{2},\dots,w_{N})$, where
$w_{n}(x,y)=(L_{n}(x),ay+g_{n}(x)),\,n=1,2,\dots ,N$, and
\[ \begin{aligned}
L_n(x) &= (x+n-1)/N \\
g_n(x) &= \begin{cases} g\big(L_n(x)\big) & \text{if} \quad 0<x<1 \\
g \left ( \frac{n-1}{N}+ \right )  & \text{if} \quad x=0 \\
g\left ( \frac{n}{N} \right )& \text{if} \quad x=1.
\end{cases}
\end{aligned}
\]
\vskip 2mm
If, in addition to properties (1-2) of the function $g$, we have
\[ (3) \quad g \left (\frac{n}{N}+ \right ) - g \left (\frac{n}{N}\right ) = \frac{a}{1-a} \, \left (g(1)-g(0) \right) \]
for $n=1,2,\dots, N-1$,
then $f$ is continuous on $[0,1]$, and the graph of $f$ restricted to the domain $[0,1]$ is the unique attractor of the
above IFS.
\end{theorem}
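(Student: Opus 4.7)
The strategy is to convert the functional equation $f(x) - a f(Nx) = g(x)$ into the Read--Bajraktarevic fixed-point equation for the IFS $W$ via the substitution $x = L_n(y)$. Existence and uniqueness of $f$ in the stated space come directly from Corollary~\ref{cor:key}(6) (note $|a|<1$ gives $|a| \neq 1 = |N|^{1/\infty}$ by the convention in Definition~\ref{def:operators}). Substituting $x = L_n(y) = (y+n-1)/N$ in the functional equation, and using the $1$-periodicity of $f$ so that $f(Nx) = f(y + (n-1)) = f(y)$, I obtain
\begin{equation*}
f(L_n(y)) \;=\; a f(y) + g_n(y), \qquad y \in [0,1],\; n = 1,\dots,N,
\end{equation*}
where $g_n$ is as defined in the theorem; the one-sided values at $y = 0, 1$ are well-defined by hypotheses (1)--(2) on $g$.

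Setting $y = 0$ in the $n = 1$ instance gives $(1-a) f(0) = g(0+) = g(0)$, and $y = 1$ in the $n = N$ instance gives $(1-a) f(1) = g(1)$, so $f(0) = y_0$ and $f(1) = y_N$. For $1 \leq k \leq N-1$, evaluating the $n = k$ relation at $y = 1$ yields $f(k/N) = a y_N + g(k/N) = g(k/N) + \tfrac{a}{1-a} g(1) = y_k$, establishing the interpolation property. For the attractor statement, $W$ is contractive on the hyperspace of nonempty compact subsets of $[0,1] \times \mathbb{R}$ (each $L_n$ contracts by $1/N$, and the vertical Lipschitz factor is $|a|<1$), so $W$ has a unique compact attractor $A$. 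The displayed identity is exactly the Read--Bajraktarevic equation for $W$, and standard fractal interpolation theory~\cite{Barnsley1} identifies $A$ with the closure of the graph of $f\big|_{[0,1]}$.

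For the continuity claim, observe that in general the $n = k$ relation at $y = 1$ and the $n = k+1$ relation at $y = 0$ produce \emph{two} candidate values for $f(k/N)$, namely $a y_N + g(k/N)$ and $a y_0 + g(k/N+)$; their difference equals $\tfrac{a}{1-a}(g(1) - g(0)) - (g(k/N+) - g(k/N))$. Hypothesis (3) is precisely the statement that this difference vanishes, so $f$ has no jump at $k/N$. Continuity on each open subinterval $((k-1)/N, k/N)$ is inherited from the continuity of $g$ there through the uniformly convergent series $\sum_{n\geq 0} a^n g(N^n x)$ of Corollary~\ref{keycor}. With $f$ continuous on $[0,1]$, its graph is already closed and hence equals $A$. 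The main obstacle throughout is the passage from an a.e.\ identity in $L^\infty$ to a pointwise statement about the graph as a subset of $[0,1] \times \mathbb{R}$: one must fix a canonical representative of $f$ (e.g.\ the uniformly convergent series above with a specified representative of $g$) and verify that its graph behaves as described at the countable set of breakpoints $\{k/N^n\}$, which is exactly where hypothesis (3) plays its decisive role.
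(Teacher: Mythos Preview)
Your overall strategy---rewriting the functional equation as the Read--Bajraktarevi\'c relation $f(L_n(y)) = a f(y) + g_n(y)$ via the substitution $x = L_n(y)$ and periodicity of $f$---matches the paper's approach, and your interpolation argument (evaluating the relation at $y=0,1$) is a clean variant of the paper's computation via the series $f(x)=\sum_k a^k g(N^k x)$.

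There is, however, a genuine gap in your continuity argument. You verify only that the two one-sided candidate values of $f$ agree at the first-level nodes $k/N$, and then assert that ``continuity on each open subinterval $((k-1)/N,k/N)$ is inherited from the continuity of $g$ there through the uniformly convergent series.'' That assertion is false: for $n\geq 1$ the term $g(N^n x)$ is discontinuous inside $((k-1)/N,k/N)$, since $N^n x \pmod 1$ passes through the jump points $j/N$ of $g$ (for instance with $N=2$, the term $g(2x)$ jumps at $x=1/4$). Hypothesis~(3) does in fact force all these jumps to telescope to zero in the sum, but that requires a separate argument at every $N$-adic rational, which you have not supplied. The paper sidesteps this entirely: it checks that under~(3) the maps satisfy the join-up conditions $F_n(x_0,y_0)=y_{n-1}$ and $F_n(x_N,y_N)=y_n$ of Equation~(\ref{eq:I3}), and then invokes the standard result from~\cite{Barnsley1} that the attractor is then the graph of a \emph{continuous} function; since the attractor was already identified with $\overline{G_f}$, this forces $f$ itself to be continuous.

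A smaller point: for the attractor statement in the possibly \emph{discontinuous} case, your appeal to ``standard fractal interpolation theory~\cite{Barnsley1}'' is not sufficient, since that theory is stated for continuous interpolants satisfying~(\ref{eq:I3}). The paper instead proves $W(\overline{G_f})=\overline{G_f}$ directly, by showing $W(G\setminus\{\text{endpoints}\})\subseteq G$ and $G\setminus\{\text{nodes}\}\subseteq W(G)$ and then taking closures. Your Read--Bajraktarevi\'c identity is exactly the content of these inclusions on the interiors, but the endpoint behaviour (where $g_n(0)=g((n{-}1)/N+)$ may differ from $g((n{-}1)/N)$) has to be handled explicitly, which is why the closure is needed.
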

\begin{proof}
Concerning the interpolation of the data, assume that $\left\vert a\right\vert <1$. Statement (1) of
Corollary~\ref{cor:key} guarantees a unique solution given by $f(x)=\sum
_{k=0}^{\infty}\,a^{k}\,g(N^{k}x)$. Substituting $x=0$ into the functional
equation, we obtain $f(0)-af(0)=g(0)$ which implies $f(0)=\frac{g(0)}{1-a}=y_0$.
Substituting $x=1$ in the series expansion yields
\[
f(1)=\sum_{k=0}^{\infty}\,a^{k}\,g( N^{k} ) = \sum_{k=0}^{\infty}\,a^{k}\,g(1)
= y_{N}.
\]
With $1\leq n\leq N-1$, substituting $x=x_{n}$ and using
properties of $g$, we have
\[
\begin{aligned}
f(x_n)  &= \sum_{k=0}^{\infty}\,a^{k}\,g( N^k x_n) =\sum_{k=0}^{\infty}\,a^{k}\,g \left ( N^k \frac{n}{N}\right )
= g\left ( \frac{n}{N} \right ) +\sum_{k=1}^{\infty}\,a^{k}\,g  ( N^{k-1} n)  \\ &=
g\left ( \frac{n}{N} \right ) +\sum_{k=1}^{\infty}\,a^{k}\,g(1) = g(x_n) + a y_N = y_n.
\end{aligned}
\]

Concerning the statement about the closure of the graph of $f$, we consider the following set-valued map associated with the IFS $W$.  With a slight abuse of notation, we shall denote the associated map  also with $W$ and let $W:2^{[0,1]\times
{\mathbb{R}}}\rightarrow2^{[0,1]\times{\mathbb{R}}}$ defined by
\[
W(B)=\bigcup_{i=1}^{N}w_{i}(B),
\]
Let $G:=\left\{  (x,f(x))\,:\,x\in\lbrack0,1]\right\}  $. It is well known,
see for example \cite[Theorem 3.2]{continuations}, that under the stated
conditions the IFS $W$ possesses a unique attractor. The attractor is the
unique compact set $A\subset\lbrack0,1]\times{\mathbb{R}}$ such that $W(A)=A$.
It suffices to show that $W(\overline{G})=\overline{G}$. Note that
 $f$ is periodic with period $1$.

To show that $W(\overline G) = \overline G$, we first show that
\begin{equation}
\label{eq:contain1}W(\widehat{G} ) \subseteq G,
\end{equation}
where $\widehat{G} = G \setminus\{(0,f(0)), (1,f(1))\}$. For any $n=1,2,\dots,
N$, let $(x^{\prime},y^{\prime}) \in w_{n}(\widehat{G})$, Then there is an
$(x,y)$ such that $x\in[0,1] \setminus\{0,1\}, \, y = f(x), \, x^{\prime}=
L_{n}(x) = (n-1+x)/N$, and
\[
y^{\prime}= ay+g_{n}(x) = a\, f(x) + g(L_{n}(x)) = a\, f(Nx^{\prime}-n+1) +
g(L_{n}(L_{n}^{-1}(x^{\prime}))) = a\,f(Nx^{\prime}) + g(x^{\prime}).
\]
This implies that $y^{\prime}= f(x^{\prime})$, so that $(x^{\prime},y^{\prime
}) \in G$.

We next show that
\begin{equation}
\label{eq:contain2}\widetilde{G} \subseteq W(G),
\end{equation}
where $\widetilde{G} = G \setminus\{(n/N,f(n/N)), \, n = 0,1,2, \dots, N\}$.
Assume that $(x,y) \in\widetilde{G}$ and, without loss of generality, that $x
\in((n-1)/N,n/N)$. Let $x^{\prime}= L_{n}^{-1}(x), \, y^{\prime}= f(x^{\prime
})$. Then
\[
y = f(x) = a\,f(Nx)+g(x) = a\, f(x^{\prime}+N-1) + g(L_{n}(x^{\prime})) = a\,
f(x^{\prime}) + g_{n}(x^{\prime}).
\]
Therefore $(x,y) = w_{n}(x^{\prime},y^{\prime}) \in W(G)$.

Note that the map $w_{n}\,: \, [0,1]\times\mathbb{R}\rightarrow\lbrack(n-1)/n,n/N] \times \R$
is a homeomorphism. From Equation~\eqref{eq:contain1} and
Equation~\eqref{eq:contain2}, respectively,
\[
\begin{aligned}
W(\overline G) &= W(\overline {\widehat G}) = \overline{W(\widehat G}) \subseteq \overline G \\
\overline G &= \overline {\widetilde G} \subseteq \overline {W(G)} = W(\overline G).
\end{aligned}
\]

With the additional assumption (3) we have, $F_1(x_0,y_0)=y_0$ and for $n=2,3, \dots, N$
\[ \begin{aligned} F_{n}(x_0,y_0) &= ay_0 + g_n(0) = ay_0 + g\left (\frac{n-1}{N}+ \right ) = ay_0 +g \left (\frac{n-1}{N}\right )  + \frac{a}{1-a} \left (g(1)-g(0) \right )
=y_{n-1}. \end{aligned}\]
Similarly $F_n(x_N,y_N) =y_n$.
Therefore the functions $F_n$ satisfy Equation~\eqref{eq:I3}, in  which case the attractor of the IFS
is the graph of a continuous function.
\end{proof}

The present formalism allows  both continuous and discontinuous
interpolants, as illustrated in Example \ref{anexample}, in contrast to
continuous interpolants in the traditional theory of fractal interpolation
functions. Furthermore, the FIF obtained herein can be evaluated
pointwise to desired precision, by summing absolutely and uniformly convergent
series. We note that discontinuous fractal functions are also mentioned in
\cite{manuvascues1}.

\begin{example} \label{anexample} It follows from Theorems~\ref{thm:interp1} and \ref{thm:interp2} that the attractor $A\subset\mathbb{[}0,1]\times\mathbb{[}-1,1]$ of the
contractive IFS
\[
W=\{\mathbb{R}^{2};w_{1}(x,y)=(x/2,ay),w_{2}(x,y)=(x/2+1/2,(1-a)+ay)\}\text{,}
\]
where $-1<a<1$,  is the closure of the graph, restricted to the domain $[0,1]$, of the unique function $f$ in the space
$L^{\infty}(\R) \cap L^2([0,1]) \cap \mathcal P$
that is the solution to the equation
\[f(x) - a f(2x) = g(x),\]
where
\[
g(x)=\begin{cases}
0 & \text{for }x\in\lbrack0,1/2]\\
1-a & \text{for }x\in(1/2,1]\\
g(x-n) &\text{for }x\in(n,n+1], \; n\in\mathbb{N}\text{.}
\end{cases}
\]
Moreover, the function $f$ interpolates the  data $\{(0,0),(0.5,a),(1,1)\}$.
The function $g:[0,\infty)\rightarrow\mathbb{R}$ is not continuous on $[0,1]$.  The function $f:[0,1]\rightarrow\mathbb{R}$, that can
be represented by the uniformly and absolutely convergent series
\[
f(x)=\sum\limits_{k=0}^{\infty}a^{k}g(2^{k}x),
\]
is not continuous for $a \neq 1/2$. That the function $f$ is discontinuous on $[0,1]$ for $a \neq 1/2$ can be verified, for instance, by showing that $f(1/2^+)=f(1/2)$ and $f(0^+)=f(0)$ cannot  be satisfied simultaneously.
When $a= 1/2$, the function $f$ is continuous; in fact $f(x)=x$.
\end{example}

For a formulation more closely related to continuous  fractal interpolation functions, as illustrated in the next
paragraph, one may assume
\begin{equation}
\label{eq:fg}g(x):=g_{0}(x)+f_{0}(x)-af_{0}(Nx)
\end{equation}
where $f_{0}:\mathbb{R}\rightarrow\mathbb{R}$ is such that
\[
\begin{aligned} f_{0}(0)&=y_{0} \\
f_{0}(1)&=y_{N} \\
f_{0}(x)&=f_{0}(x-1) \quad \text{ for} \; \; x\in(1,\infty), \end{aligned}
\]
and $g_{0}:\mathbb{R\rightarrow R}$ is continuous and such that
\[
\begin{aligned} g_{0}(0)&=g_{0}(1)=0 \\ g_{0}(x) &=g_{0}(x+1) \quad \text{for all} \;\; x\in\mathbb{R} \\
g_{0}(x_{n}) &=y_{n}-f_{0}(x_{n}) \quad \text{for} \;\;  n=1,2,\dots, N-1. \end{aligned}
\]
It is readily confirmed that $g(x)$ satisfies the conditions (1)-(2) of Theorem \ref{thm:interp2}.
Furthermore, it can be shown that, if $f_{0}(x)$ is continuous on $[0,1]$
(from the right at $x=0$ and from the left at $x=1$), then $g$ satisfies condition (3) prescribed in Theorem \ref{thm:interp2}.
Consequently, the solution $f(x)$
to the functional equation $f(x)-af(Nx)=g(x)$ is continuous on $[0,1]$. Note,
however, that it is not in general assumed that $g(x)$ is continuous.

In this setting, the free parameters, namely the \textquotedblleft base function" $f_{0}$, the function
$g_{0},$ and the vertical scaling parameter $a$, may be chosen to obtain
diverse fractal interpolation systems, for instance, Hermite and spline FIFs
\cite{Barnsley1,CK,Nav1}. They can also be chosen to control the Minkowski
dimension and other properties of the graph of the approximant $f$. For
example, it is reported in \cite{baranski2} that both the Minkowski dimension
and the packing dimension of the graph of $f$ are given by $D=\max
\{2+ \frac{\ln\left\vert a\right\vert}{\ln N},1\}$, for various classes of function
$g_{0}$. Consistent formulas for the Minkowski dimensions related to graphs
of a fractal interpolation function are established in \cite{della,geronimo,hardin}.

In references \cite{Barnsley1, Nav2} it is observed that  the notion of fractal interpolation can be used to associate an entire family of fractal functions $\{h^\alpha: \alpha \in (-1,1)^{N}\}$ with a prescribed continuous function $h$ on a compact interval. To this end, one may consider Equation (\ref{eq:I2}) with
$g_n(x) = h \big(L_n(x)\big) -\alpha_n b(x)$, where $b: [0,1] \to \mathbb{R}$ is a continuous function such that $b \not \equiv h$ and $b$ interpolates $h$ at the extremes of the interval $[0,1]$. Each function $h^\alpha$  in this family is referred to as $\alpha$-fractal function or \textquotedblleft fractal perturbation" corresponding to $h$. In our present setting, the function $f$ is the fractal perturbation corresponding to $g_{0}+f_{0}$ with base function $f_{0}$
and constant scale vector $\alpha$ whose components are $a$. Therefore, the
$\alpha$-fractal function and the approximation classes obtained through the
corresponding fractal operator (see, for instance, \cite{Nav2,Vis}) can also
be discussed using the present formalism.

\section{Weierstrass Fourier Approximation}

\label{sec:ON}

This section deals with a framework for a ``fractal" Fourier analysis. A
natural complete orthonormal basis set of fractal functions is provided that
serves as a rough analog of the standard sine-cosine Fourier basis. These
fractal counterparts are obtained as solutions $f$ to the functional
equation~\eqref{eqref}, with $g\in\{\sin2k\pi x,\cos2k\pi x\}_{k=1}^{\infty} \cup \{1\}$.

\begin{proposition}
\label{prop:ON} Let $f(x)$ be the solution to $f(x)-af(bx) = g(x)$ in
$L^{2}({\mathbb{R}})$ where $\left\vert a\right\vert <\left\vert b\right\vert
^{1/2}$. If $\left\{  g_{k}\right\}  _{k=1}^{\infty}$ is an orthononormal
basis for $L^{2}(\mathbb{R)}$, then
\[
\langle f_{k},f_{l}\rangle=c+\sum_{n=1}^{\infty}\frac{a^{2n}}{b^{n}}
\,\sum_{m=1}^{n}\frac{b^{m} }{a^{m}}\langle g_{k},(T_{b^{m}}+T_{b^{m}}^{\ast
})g_{l}\rangle,
\]
where $c=(1-a^{2}/b)^{-1}$.
\end{proposition}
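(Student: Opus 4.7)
The plan is to expand both $f_k$ and $f_l$ via the Neumann series $f_\bullet = \sum_{n=0}^{\infty} a^n T_b^n g_\bullet$ furnished by Corollary~\ref{keycor} (applicable because $|a|<|b|^{1/2}$), substitute into the inner product, exchange summation, and carefully evaluate the resulting double series. Observing that $T_b^n = T_{b^n}$, the inner product becomes
\[
\langle f_k, f_l\rangle \;=\; \sum_{n=0}^{\infty}\sum_{m=0}^{\infty} a^{n+m}\,\langle T_{b^n}g_k,\, T_{b^m}g_l\rangle.
\]
Absolute convergence of this double sum follows from Cauchy--Schwarz together with the operator-norm identity $\|T_{b^n}\|_2 = |b|^{-n/2}$ from Theorem~\ref{thm:key}(2) and the hypothesis $a^2/|b|<1$, so Fubini applies.

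To evaluate the individual inner products, I would perform the linear change of variables $u = b^{\min(n,m)}x$ in the defining integral. This produces a Jacobian factor $|b|^{-\min(n,m)}$ and reduces the bracket to one of the form $\langle T_{b^{|n-m|}}g_k,\,g_l\rangle$ when $n\geq m$, or $\langle g_k,\,T_{b^{|n-m|}}g_l\rangle$ when $m>n$. The identity $T_b^{\ast} = b^{-1}T_{1/b}$ from the remark following Corollary~\ref{keycor} allows me to rewrite the first of these as $\langle g_k,\,T_{b^{|n-m|}}^{\ast}g_l\rangle$, so that the $n>m$ and $n<m$ contributions assemble into the symmetric combination $\langle g_k,(T_{b^{m}}+T_{b^m}^{\ast})g_l\rangle$.

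Grouping the terms of the double sum by $N=\max(n,m)$, the diagonal piece $n=m=N$ contributes $\delta_{kl}(a^2/b)^N$, and summing over $N\geq 0$ yields the constant $c=(1-a^2/b)^{-1}$. For each $N\geq 1$, the off-diagonal terms with $\max(n,m)=N$, after reindexing via $m \leftrightarrow N-\min(n,m)$, collapse to
\[
\frac{a^{2N}}{b^{N}}\sum_{m=1}^{N}\frac{b^{m}}{a^{m}}\,\langle g_k,(T_{b^m}+T_{b^m}^{\ast})g_l\rangle,
\]
and summing this over $N\geq 1$ produces exactly the stated identity.

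The main obstacle is purely organizational: carefully tracking indices through the substitution, avoiding double-counting when passing from a sum over ordered pairs $(n,m)$ to one indexed by $(N,m)$ with $1\leq m\leq N$, and deploying the adjoint identity at the right moment to unify the two off-diagonal sums. A minor technical point worth flagging is that the Jacobian is $|b|^{-\min(n,m)}$, so writing the final answer with $b^{N}$ in place of $|b|^{N}$ presupposes $b>0$ (or else $b^{N}$ should be interpreted as $|b|^{N}$ throughout).
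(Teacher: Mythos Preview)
Your proposal is correct and follows essentially the same route as the paper: both arguments expand $M_{a,b}^{-1}$ as a Neumann series, arrive at the double sum $\sum_{n,m}a^{n+m}|b|^{-\min(n,m)}\langle g_k,T_{b^{n-m}}\,g_l\rangle$, split off the diagonal to produce $c$, and reindex the off-diagonal part into the stated form. The only cosmetic difference is that the paper packages the adjoint computation at the operator level first (writing $\langle f_k,f_l\rangle=\langle g_k,(M^{\ast}M)^{-1}g_l\rangle$ and expanding $(I-T_{a,b})^{-1}(I-T_{a,b}^{\ast})^{-1}$), whereas you expand $f_k$ and $f_l$ separately and recover the same factor $|b|^{-\min(n,m)}$ via a change of variables; your observations about the $\delta_{kl}$ on the diagonal term and the $|b|$ versus $b$ issue are both well taken.
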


\begin{proof}
Define $T_{a,b}=aT_{b}$. We have $T_{a,b}^{\ast}=T_{\frac{a}{b},\frac{1}{b}}$,
and also $T_{a,b}T_{c,d}=T_{ac,bd}=T_{c,d}T_{a,b}.$ On taking the product,
term-by-term, of two absolutely and uniformly convergent series of linear
operators, we obtain
\begin{align*}
((I-T_{a,b})^{\ast}(I-T_{a,b}))^{-1}  &  =(I-T_{a,b})^{-1}(I-T_{a,b}^{\ast
})^{-1}\\
&  =(I-T_{a,b})^{-1}(I-T_{\frac{a}{b},\frac{1}{b}})^{-1}\\
&  =\left(  \sum_{n=0}^{\infty}\,a^{n}T_{b}^{n}\right)  \left(  \sum
_{m=0}^{\infty}\,\left(  \frac{a}{b}\right)  ^{m}T_{\frac{1}{b}}^{m}\right) \\
&  =\sum_{n=0}^{\infty}\sum_{m=0}^{\infty}\,\frac{a^{n+m}}{b^{m}}T_{b}
^{n}T_{\frac{1}{b}}^{m}=\sum_{n=0}^{\infty}\sum_{m=0}^{\infty}\,\frac{a^{n+m}
}{b^{m}}T_{b^{n-m}}
\end{align*}
Now let $\left\{  g_{k}\right\}  _{k=1}^{\infty}$ be an orthononormal basis
for $L^{2}(\mathbb{R)}.$ Let $f_{k}=M_{a,b}^{-1}(g_{k})=(I-T_{a,b})^{-1}g_{k}
$. Since, by Theorem \ref{thm:key}, $(I-T_{a,b})^{-1}$ is a linear
homeomorphism on $L^{2}(\mathbb{R)}$, the set of functions $\left\{
f_{k}\right\}  _{k=1}^{\infty}$ is a Riesz basis for $L^{2}(\mathbb{R)}$.
Then
\begin{align*}
&  \langle f_{k},f_{l}\rangle=\langle g_{k},((I-T_{a,b})^{\ast}(I-T_{a,b}
))^{-1}g_{l}\rangle\\
&  =\sum_{n=0}^{\infty}\sum_{m=0}^{\infty}\,\frac{a^{n+m}}{b^{m}}\langle
g_{k},T_{b^{n-m}}g_{l}\rangle\\
&  =\sum_{n=0}^{\infty}\left(  \frac{a^{2}}{b}\right)  ^{n}+\sum
_{\substack{n,m=0\\m<n}}^{\infty}\,\frac{a^{n+m}}{b^{m}}\langle g_{k}
,T_{b^{n-m}}g_{l}\rangle+\sum_{\substack{n,m=0\\m>n}}^{\infty}\,\frac{a^{n+m}
}{b^{m}}\langle g_{k},T_{b^{n-m}}g_{l}\rangle\\
&  =c+\sum_{\substack{n,m=0\\m<n}}^{\infty}\,\frac{a^{n+m}}{b^{m}}\langle
g_{k},T_{b^{n-m}}g_{l}\rangle+\sum_{\substack{n,m=0\\m<n}}^{\infty}
\,\frac{a^{n+m}}{b^{n}}\langle g_{k},T_{b^{m-n}}g_{l}\rangle\\
&  =c+\sum_{\substack{n,m=0\\m<n}}^{\infty}\,\frac{a^{n+m}}{b^{m}}\langle
g_{k},T_{b^{n-m}}g_{l}\rangle+\sum_{\substack{n,m=0\\m<n}}^{\infty}
\,\frac{a^{n+m}}{b^{m}}\langle g_{k},T_{b^{n-m}}^{\ast}g_{l}\rangle\\
&  =c+\sum_{n=1}^{\infty}\,\sum_{m=0}^{n-1}\frac{a^{n+m}}{b^{m}}\langle
g_{k},(T_{b^{n-m}}+T_{b^{n-m}}^{\ast})g_{l}\rangle\\
&  =c+\sum_{n=1}^{\infty}\,\sum_{m=1}^{n}\frac{a^{2n-m}}{b^{n-m}}\langle
g_{k},(T_{b^{m}}+T_{b^{m}}^{\ast})g_{l}\rangle\\
&  =c+\sum_{n=1}^{\infty}\frac{a^{2n}}{b^{n}}\,\sum_{m=1}^{n}\frac{b^{m}
}{a^{m}}\langle g_{k},(T_{b^{m}}+T_{b^{m}}^{\ast})g_{l}\rangle
\end{align*}
where $c=(1-a^{2}/b)^{-1}$.
\end{proof}

A similar looking but different expression obtains in the case $\left\vert
a\right\vert >\left\vert b\right\vert ^{1/2}$. Clearly, such series are
amenable to computation, as we illustrate in the next section. For another
example, the $g_{k}$s in Proposition~\ref{prop:ON} could be $(\sqrt{\pi}
2^{k}k!)^{-\frac{1}{2}}H_{k}(x)\exp(-x^{2}/2)$, where the $H_{k}$s are Hermite
polynomials \cite{johnston}.

\subsection{Weierstrass Fourier Basis}

Working in $L^{2}[0,1]$, the inner product is $\langle f,h\rangle
:=\int\limits_{0}^{1}f(x)h(x)dx$. The set of functions $\{\sqrt{2}\cos k2\pi
x\}_{k=1}^{\infty} \, \cup\, \{\sqrt{2}\sin k2\pi x\}_{k=1}^{\infty}\cup\{1\}$
is a complete orthonormal basis for $L^{2}[0,1].$ Consider these as functions
on $\mathbb{R}$, periodic of period $1$. Let
\begin{align*}
c_{k}(x)  &  =\sqrt{2}\cos k2\pi x,\text{ }\\
s_{k}(x)  &  =\sqrt{2}\sin k2\pi x\text{, }\\
e(x)  &  =1\text{,}
\end{align*}
for all $k\in\mathbb{N}$ and $x\in\mathbb{R}$. Inner products are given by
\begin{align*}
&  \langle s_{k},s_{l}\rangle=\langle c_{k},c_{l}\rangle=\delta_{k,l},\\
&  \langle s_{k},c_{l}\rangle=\langle e,c_{k}\rangle=\langle e,s_{k}
\rangle=0,\text{ }\langle e,e\rangle=1,
\end{align*}
for all $k,l\in\mathbb{N}$.

Let $b=2,\left\vert a\right\vert <1$, and $M=M_{a,b}$. In view of statement
(6) of Corollary~\ref{cor:key}, a new normalized basis for $L^{2}[0,1]$ is
$\{\widehat{e},\widehat{c}_{k},\widehat{s}_{k}:k\in\mathbb{N}\}$, where
\[
\begin{aligned}
\widehat{e} &=(1-a) \, M ^{-1}(e)=e \\
\widehat{c}_{k} &=\sqrt{1-a^{2}}\, M^{-1}(c_{k}) \\
\widehat{s}_{k}&=\sqrt{1-a^{2}}\, M^{-1}
(s_{k}).
\end{aligned}
\]
For $1\leq k\leq l$, the inner products are
\begin{align*}
&  \langle\widehat{c}_{k},\widehat{c}_{l}\rangle=2(1-a^{2})\sum_{n,m=0}
^{\infty}a^{n+m}\int\limits_{0}^{1}(\cos k\pi2^{n+1}x)(\cos l\pi2^{m+1}x)dx\\
&  =(1-a^{2})\sum_{n,m=0}^{\infty}a^{n+m}\delta_{2^{n}k,2^{m}l}=(1-a^{2}
)\sum_{\substack{n,m=0\\n\geq m}}^{\infty}a^{n+m}\delta_{2^{n}k,2^{m}l}\\
&  =(1-a^{2})\sum_{\substack{n,m=0\\n\geq m}}^{\infty}a^{n+m}\delta
_{2^{n-m}k,l}=(1-a^{2})\sum_{i,m\geq0}a^{2m+i}\delta_{2^{i}k,l}\\
&  =\left\{
\begin{array}
[c]{cc}
a^{j} & \text{if }l=2^{j}k,\\
& \\
0 & \text{otherwise.}
\end{array}
\right.
\end{align*}
Similar expressions are obtained for $\{\widehat{s}_{k}\}_{k=1}^{\infty}$. In
summary, for all $k,l\in\mathbb{N}$,
\begin{equation}
\begin{aligned} \langle\widehat{c}_{k},\widehat{e}\rangle &=\langle\widehat{s}_{k},\widehat{e}\rangle=\langle \widehat{c} _{k},\widehat{s}_{l}\rangle=0 \; \; \text{ and }\; \langle\widehat{e},\widehat{e}\rangle=1, \\ \langle \widehat{c}_{k},\widehat{c}_{l} \rangle = \langle \widehat{s}_{k},\widehat{s}_{l} \rangle &=\begin{cases} a^{j}\quad \text{ if }k=2^{j}l\text{ or }l=2^{j}k\text{ for some }j\in \mathbb{N} \cup \{0\}, \\ 0 \quad \; \text{ if }k\neq2^{j}l\text{ and }l\neq2^{j}k\text{ for some }j\in\mathbb{N} \cup \{0\}. \end{cases} \end{aligned} \label{eq:cs}
\end{equation}
The Gram matrix of inner products of these basis functions, as displayed
below, is relatively sparse. See the work of Per-Olof L\"{o}wdin on overlap
matrices in quantum mechanics, for example \cite{lowdine}.
\[
\left(  \langle\widehat{s}_{k},\widehat{s}_{l}\rangle\right)  _{k,l=1}
^{\infty}=\left(  \langle\widehat{c}_{k},\widehat{c}_{l}\rangle\right)
_{k,l=1}^{\infty}=\left(
\begin{array}
[c]{ccccccccc}
1 & a^{1} & 0 & a^{2} & 0 & 0 & 0 & a^{3} & .\\
a^{1} & 1 & 0 & a^{1} & 0 & 0 & 0 & a^{2} & .\\
0 & 0 & 1 & 0 & 0 & a^{1} & 0 & 0 & .\\
a^{2} & a^{1} & 0 & 1 & 0 & 0 & 0 & a^{1} & .\\
0 & 0 & 0 & 0 & 1 & 0 & 0 & 0 & .\\
0 & 0 & a^{1} & 0 & 0 & 1 & 0 & 0 & .\\
0 & 0 & 0 & 0 & 0 & 0 & 1 & 0 & .\\
a^{3} & a^{2} & 0 & a^{1} & 0 & 0 & 0 & 1 & .\\
. & . & . & . & . & . & . & . & .
\end{array}
\right)  .
\]
Note that, for $m=0,1,2,3$,
\[
\det\left(  \langle\widehat{c}_{k},\widehat{c}_{l}\rangle\right)
_{k,l=1}^{2^{m}}=(1-a^{2})^{2^{m}}\text{,}
\]
which suggests that this formula holds for all $m\in\mathbb{N}\cup\{0\}$.

The graph of each function $\widehat{c}_{k},\widehat{s}_{k}$ has Minkowski (and in ``many
cases" Hausdorff) dimension $D=2+\left(  \ln a\right)  /\ln2$ when $a>0.5$;
see \cite{baranski} and \cite{hunt}. It is straightforward to apply the
Gram-Schmidt algorithm to obtain the complete orthonormal (ON) basis of Weierstrass
nowhere differentiable functions given in the following theorem.

\begin{theorem}
\label{thm:ONbasis} The set of functions $\{1, \widetilde{c}_{k},
\widetilde{s}_{k} : k \in\mathbb{N}\}$, where
\[
\begin{aligned}
\widetilde c_i &= \begin{cases} \widehat c_i & \text{if} \; i \; \text{is odd} \\
\frac{\widehat c_i - a \, \widehat c_{i/2}}{\sqrt{1-a^2}} = \sqrt{1-a^2} \; \widehat c_i - a \,c_{i/2} & \text{if} \; i \; \text{is even}\end{cases}
\\ \\
\widetilde s_i &= \begin{cases} \widehat s_i & \text{if} \; i \; \text{is odd} \\
\frac{\widehat s_i - a \, \widehat s_{i/2}}{\sqrt{1-a^2}} =  \sqrt{1-a^2} \; \widehat s_i - a \,s_{i/2} & \text{if} \; i \; \text{is even},\end{cases}
\end{aligned}
\]
is a complete ON basis for $L^{2}([0,1])$.
\end{theorem}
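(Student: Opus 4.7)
The plan is to verify orthonormality by direct inner-product computation using \eqref{eq:cs}, then deduce completeness from the Riesz basis property of $\{\widehat{e}, \widehat{c}_k, \widehat{s}_k\}$ established in Corollary~\ref{cor:key}(6). The structural observation driving the proof is that each $k \in \mathbb{N}$ factors uniquely as $k = 2^n k_0$ with $k_0$ odd, and by \eqref{eq:cs} the inner product $\langle \widehat{c}_k, \widehat{c}_l \rangle$ vanishes unless $k$ and $l$ share the same odd part, in which case $\langle \widehat{c}_{2^n k_0}, \widehat{c}_{2^m k_0} \rangle = a^{|n-m|}$. Thus the infinite Gram matrix of the $\widehat{c}$'s decomposes as an orthogonal direct sum of infinite symmetric Toeplitz blocks $(a^{|n-m|})_{n,m \geq 0}$, one per odd $k_0$; the identical picture holds for the $\widehat{s}$'s; and all these chains are mutually orthogonal and orthogonal to $\widehat{e} = 1$.

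Next I would recognize the formula defining $\widetilde{c}_i$ as the Gram-Schmidt orthonormalization within one such chain $f_n := \widehat{c}_{2^n k_0}$. The key point is that $\langle f_n - a f_{n-1}, f_m \rangle = a^{n-m} - a \cdot a^{n-1-m} = 0$ for every $m < n$, so the orthogonalization step telescopes to the single correction $f_n - a f_{n-1}$; since $\|f_n - a f_{n-1}\|^2 = 1 - 2a \cdot a + a^2 = 1 - a^2$, normalization produces exactly the expression in the theorem. Orthonormality of $\{\widetilde{c}_i\}$ then follows from a case-by-case evaluation of $\langle \widetilde{c}_i, \widetilde{c}_j \rangle$ split by the parities of $i, j$ and by whether $i = j$, each reducing to a short algebraic cancellation such as $a^{m-n} - a \cdot a^{m-n+1} - a \cdot a^{m-n-1} + a^2 \cdot a^{m-n} = 0$. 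The analogous calculation handles the $\widetilde{s}$'s, and orthogonality of $\widetilde{c}$'s to $\widetilde{s}$'s and to $1$ follows immediately from \eqref{eq:cs}, since subtracting $a \widehat{c}_{i/2}$ preserves every pre-existing zero inner product.

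For completeness I would argue $\overline{\operatorname{span}}\{\widetilde{c}_k\}_{k \in \mathbb{N}} = \overline{\operatorname{span}}\{\widehat{c}_k\}_{k \in \mathbb{N}}$: one inclusion is trivial, and for the reverse, rearranging gives $\widehat{c}_i = \sqrt{1-a^2}\,\widetilde{c}_i + a\widehat{c}_{i/2}$ for even $i$, so each $\widehat{c}_i$ is recovered by finitely many iterated substitutions down to the odd base case $\widehat{c}_{k_0} = \widetilde{c}_{k_0}$; the same holds for the sines. Since Corollary~\ref{cor:key}(6) makes $M_{a,2}$ an automorphism on the ambient periodic space containing the classical sine--cosine--constant ON basis of $L^2([0,1])$, the family $\{\widehat{e}, \widehat{c}_k, \widehat{s}_k\}$ is a Riesz basis, hence a complete set, in $L^2([0,1])$; combined with the orthonormality above, this gives the theorem. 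The main step requiring care is precisely the Gram-Schmidt telescoping: without explicitly exploiting the Toeplitz form $a^{|n-m|}$ of each chain's Gram matrix, one might mistakenly expect the formula for $\widetilde{c}_i$ to involve corrections from all earlier $\widehat{c}_{i/2}, \widehat{c}_{i/4}, \ldots$ rather than only the single term $a\widehat{c}_{i/2}$.
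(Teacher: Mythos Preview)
Your proposal is correct and follows essentially the same route as the paper: both verify orthonormality by a case-by-case computation of $\langle \widetilde c_k,\widetilde c_l\rangle$ directly from the Gram relations \eqref{eq:cs}, with the parities of $k,l$ governing the cases. Your chain/Toeplitz framing is a helpful conceptual repackaging of the same calculation, and you make explicit the completeness step (recovering each $\widehat c_i$ as a finite combination of $\widetilde c$'s, hence equal closed spans) that the paper leaves implicit in its remark that the $\widetilde c_i,\widetilde s_i$ arise from Gram--Schmidt applied to a Riesz basis.
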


\begin{proof}
Using the relations in Equation~\eqref{eq:cs} it follows readily that, if $k$
is odd and $l$ is even, then $\langle\widetilde{c}_{k},\widetilde{c}_{l}
\rangle= 0$ unless $l = k2^{j}$ for some positive integer $j$, in which case,
\[
\sqrt{1-a^{2}} \, \langle\widetilde{c}_{k},\widetilde{c}_{l} \rangle=
\langle\widehat{c}_{k},\widehat{c}_{l} \rangle- a \, \langle\widehat{c}
_{k},\widehat{c}_{l/2} \rangle= a^{j} - a \, a^{j-1} = 0.
\]
For $k < l$, both even, it again readily follows that $\langle\widetilde{c}
_{k},\widetilde{c}_{l} \rangle= 0$ unless $l = k2^{j}$ for some positive
integer $j$, in which case,
\[
(1-a^{2}) \, \langle\widetilde{c}_{k},\widetilde{c}_{l} \rangle=
\langle\widehat{c}_{k},\widehat{c}_{l} \rangle+ a^{2} \, \langle
\widehat{c}_{k/2},\widehat{c}_{l/2}\rangle- a \, \langle\widehat{c}
_{k},\widehat{c}_{l/2} \rangle- a \, \langle\widehat{c}_{k/2},\widehat{c}_{l}
\rangle= a^{j} + a^{j+2} - a\, a^{j-1} - a\, a^{j+1} = 0.
\]
For $k=l$, both even,
\[
(1-a^{2}) \, \langle\widetilde{c}_{k},\widetilde{c}_{k} \rangle=
\langle\widehat{c}_{k},\widehat{c}_{k} \rangle+ a^{2} \, \langle
\widehat{c}_{k/2},\widehat{c}_{k/2}\rangle- 2a \, \langle\widehat{c}
_{k},\widehat{c}_{k/2} \rangle= 1 + a^{2} - 2a \, a = 1-a^{2}.
\]

To show the equality of the two expressions in the even cases, express
$\widehat{c}_{i}$ (or $\widehat{s}_{i}$) as a sum of the $c_{i}$'s (or $s_{i}
$'s) using Equation~\eqref{eq:sum} and simplify.
\end{proof}

A given function $h\in L^{2}([0,1])$ has a Fourier expansion in terms of the
complete ON basis $\{1,s_{k},c_{k}:k\in\mathbb{N}\}$. If $h$ is, in addition,
bounded and extended periodically, it has an expansion, that we refer to as a
\textbf{Weierstrass Fourier series}, in terms of the complete ON basis
$\{1,\widetilde{s}_{k},\widetilde{c}_{k}:k\in\mathbb{N}\}$ of fractal functions.

\begin{theorem}
If $h \in L^{2}([0,1])$ has Fourier expansion
\[
h(x) = \alpha_{0} + \sum_{n=1}^{\infty} \left[  \alpha_{n} \, c_{n}(x) +
\beta_{n} \, s_{n}(x) \right]  ,
\]
then on the interval $[0,1]$ it also has Weierstrass Fourier expansion
\[
h(x) = \widetilde{\alpha}_{0} + \sum_{n=1}^{\infty} \left[  \widetilde{\alpha
}_{n}\, \widetilde{c}_{n}(x) + \widetilde{\beta}_{n} \, \widetilde{s}_{n}(x)
\right]  ,
\]
where $\widetilde{\alpha}_{0} = \alpha_{0}$ and
\[
\begin{aligned} \widetilde \alpha_n &= \begin{cases} \sqrt{1-a^2} \, \sum_{m=0}^{\infty} a^m \, \alpha_{n2^m} & \text{if $n$ is odd} \\ \\
- a \, \alpha_{n/2} + (1-a^2) \, \sum_{m=0}^{\infty} a^m \, \alpha_{n2^m}  & \text{if $n$ is even}, \end{cases} \\ \\
\widetilde  \beta_n &= \begin{cases} \sqrt{1-a^2} \, \sum_{m=0}^{\infty} a^m \, \beta_{n2^m} & \text{if $n$ is odd} \\ \\
- a \, \beta_{n/2} + (1-a^2) \, \sum_{m=0}^{\infty} a^m \, \beta_{n2^m}  & \text{if $n$ is even}.
\end{cases} \end{aligned}
\]

\end{theorem}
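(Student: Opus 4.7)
The plan is to leverage the orthonormality of $\{1,\widetilde c_n,\widetilde s_n\}_{n\ge 1}$ in $L^2([0,1])$, established in Theorem~\ref{thm:ONbasis}, so that the Weierstrass Fourier coefficients are simply inner products:
\[
\widetilde\alpha_n = \langle h, \widetilde c_n\rangle, \qquad \widetilde\beta_n = \langle h, \widetilde s_n\rangle, \qquad \widetilde\alpha_0 = \langle h, 1\rangle = \alpha_0.
\]
The computation then reduces to expressing each $\widetilde c_n$ and $\widetilde s_n$ in the standard trigonometric basis and reading off the answer via $\langle h, c_k\rangle = \alpha_k$ and $\langle h, s_k\rangle = \beta_k$, which follow from the hypothesized Fourier expansion of $h$.

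First, I would expand $\widehat c_n$ and $\widehat s_n$ in the standard basis. Since $\widehat c_n = \sqrt{1-a^2}\,M_{a,2}^{-1}c_n$ and $T_2^k c_n(x) = \sqrt{2}\cos(2\pi\,n\,2^k x) = c_{n\,2^k}(x)$, applying Corollary~\ref{keycor} together with statement (6) of Corollary~\ref{cor:key} produces the $L^2$-convergent series
\[
\widehat c_n = \sqrt{1-a^2}\sum_{k=0}^\infty a^k c_{n\,2^k}, \qquad \widehat s_n = \sqrt{1-a^2}\sum_{k=0}^\infty a^k s_{n\,2^k}.
\]
Continuity of $\langle h, \cdot\rangle$ on $L^2([0,1])$ lets me interchange the sum with the inner product, so $\langle h, \widehat c_n\rangle = \sqrt{1-a^2}\sum_{k=0}^\infty a^k \alpha_{n\,2^k}$ and similarly for $\widehat s_n$. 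For $n$ odd, Theorem~\ref{thm:ONbasis} gives $\widetilde c_n = \widehat c_n$, yielding the odd-$n$ formula for $\widetilde\alpha_n$ at once. For $n$ even, I would use the form $\widetilde c_n = \sqrt{1-a^2}\,\widehat c_n - a\,c_{n/2}$ from the same theorem to compute
\[
\widetilde\alpha_n = \sqrt{1-a^2}\,\langle h, \widehat c_n\rangle - a\,\alpha_{n/2} = (1-a^2)\sum_{k=0}^\infty a^k \alpha_{n\,2^k} - a\,\alpha_{n/2},
\]
matching the claimed even-$n$ formula. The identical argument with $\widetilde s_n$ and the sine coefficients handles $\widetilde\beta_n$, while the constant mode is trivial since $\widetilde e = e$.

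There is essentially no hard analytic step: boundedness of $M_{a,2}^{-1}$ on the ambient Hilbert space (Theorem~\ref{thm:key}, Corollary~\ref{cor:key}(6)) makes the interchange of $\langle h, \cdot\rangle$ with the geometric series in $a$ immediate. The only place where a small amount of care is needed is the parity split from Theorem~\ref{thm:ONbasis}: verifying that the factor $\sqrt{1-a^2}$ arising from the definition of $\widehat c_n$ multiplies the second $\sqrt{1-a^2}$ in the even-$n$ expression for $\widetilde c_n$ to produce the stated coefficient $(1-a^2)$, and that the subtracted term $-a\,\alpha_{n/2}$ is correctly accounted for. Thus the proof is largely a matter of bookkeeping built on top of the Neumann series provided by Theorem~\ref{thm:key}.
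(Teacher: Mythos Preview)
Your proposal is correct and follows essentially the same approach as the paper: compute $\widetilde\alpha_n=\langle h,\widetilde c_n\rangle$ by expressing $\widetilde c_n$ in terms of $\widehat c_n$ and $c_n$ via Theorem~\ref{thm:ONbasis}, then in terms of the $c_k$ alone using the series from Corollary~\ref{keycor}, and read off the answer using orthogonality. The paper's proof is just a terse outline of exactly this computation.
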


\begin{proof}
To compute $\widetilde{\alpha}_{n}=\langle h,\widetilde{c}_{n}\rangle$,
express $\widetilde{c}_{n}$ and $\widetilde{s}_{n}$ in terms of the
$\widehat{c}_{n}$ and $c_{n}$ using Theorem~\ref{thm:ONbasis}, then just in
terms of the $c_{n}$ using Equation~\eqref{eq:sum}. The orthogonality
relations for the respective sine and cosine functions yields the formulas in
the statement of the theorem, similarly for the computation of
$\widetilde{\beta}_{n}=\langle h,\widetilde{s}_{n}\rangle$.
\end{proof}

\begin{remark}  If $a=0$, then $\widetilde c_n = c_n$ and $\widetilde s_n = s_n$, for all $n$, and
 the Weierstrass Fourier series reduces to the classical Fourier series.
\end{remark}

\begin{example}
\label{ex2}Figures \ref{Fig10terms}, \ref{figure50terms} and
\ref{figure50terms(point3)} illustrate both classical Fourier and Weierstrass
Fourier approximations of the function $h(x)=x-0.5$ over the interval $[0,1]$.
\end{example}

\begin{figure}[ptb]
\centering
\fbox{\includegraphics[height=3in, keepaspectratio
]
{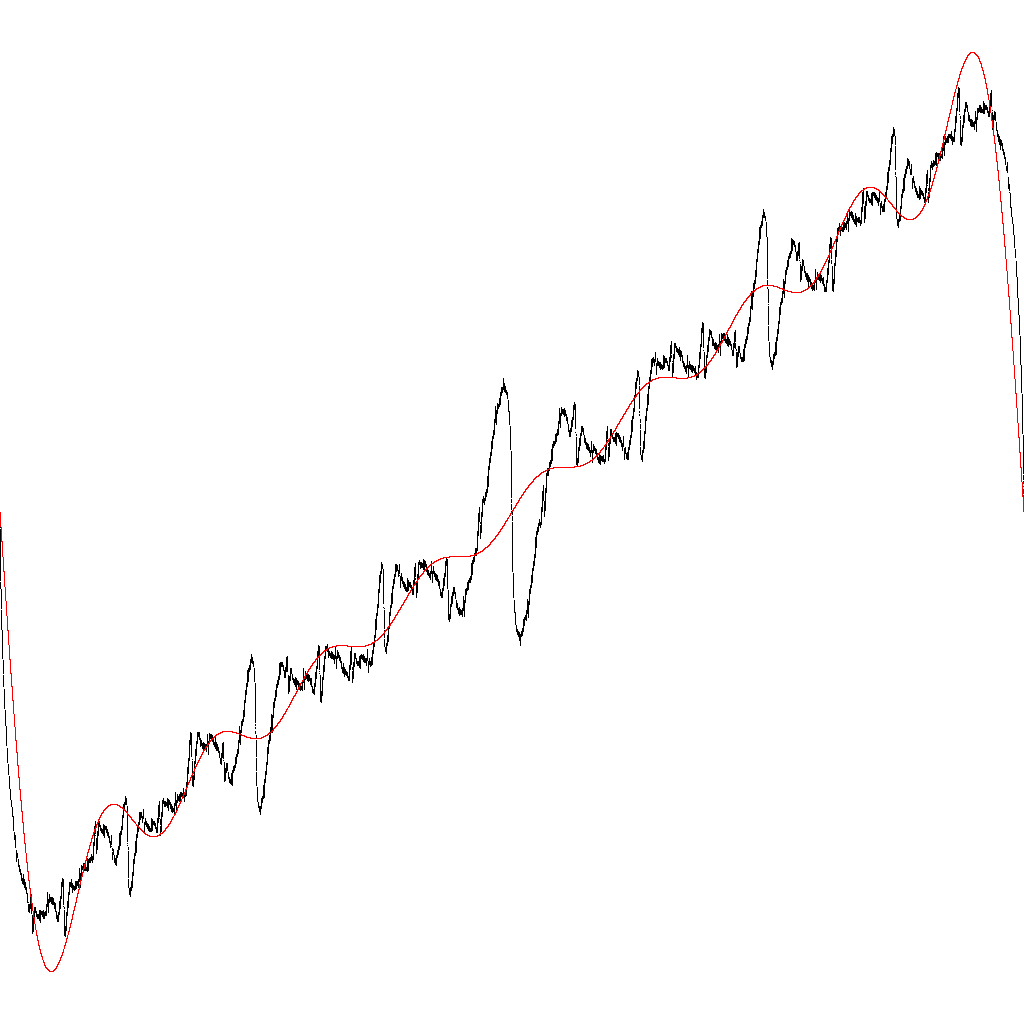}
}\caption{The sum of the first ten terms of the Fourier (red) and the
Weierstrass Fourier ($a=0.6$) series (black) approximations of the function
$h(x)=x-0.5$.}
\label{Fig10terms}
\end{figure}

\begin{figure}[ptb]
\centering
\fbox{\includegraphics[height=3in, keepaspectratio
]
{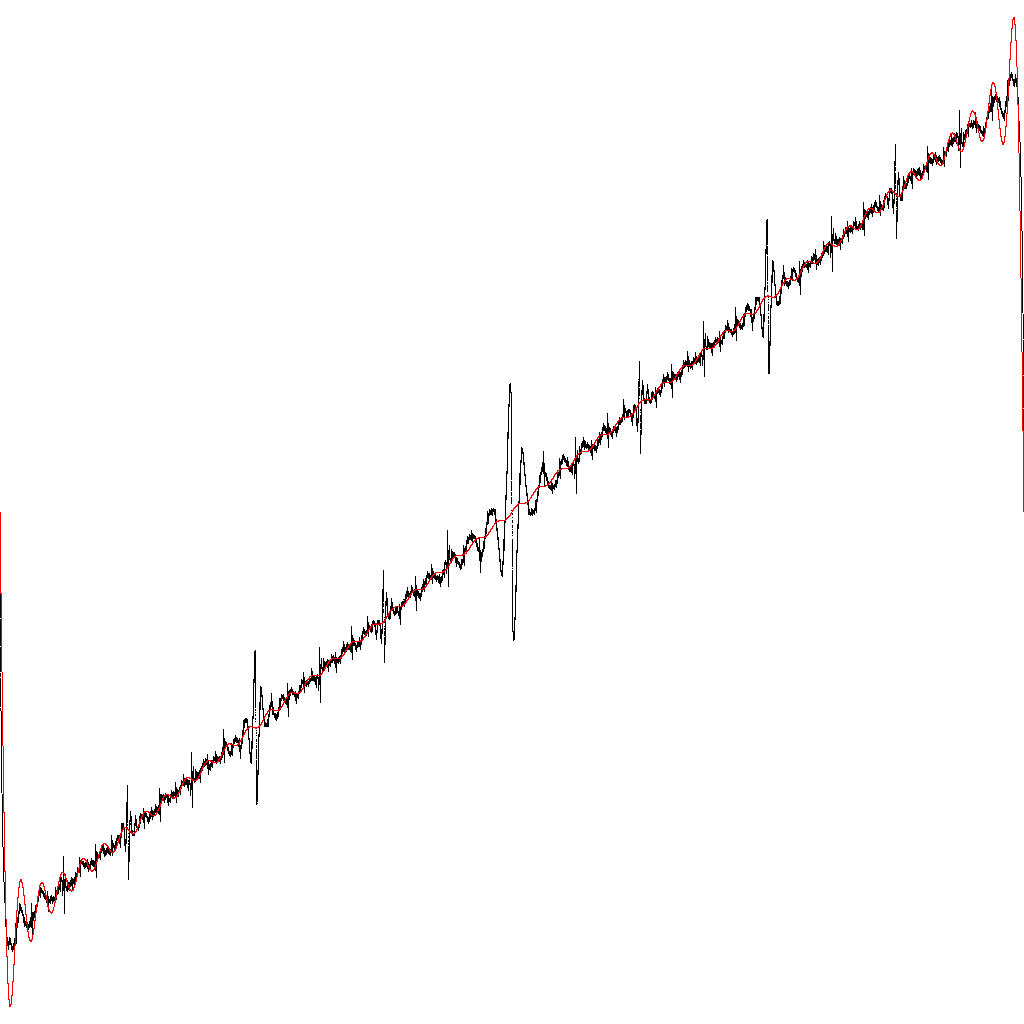}
}\caption{The sum of the first fifty terms of the Fourier (red) and the
Weierstrass Fourier ($a=0.6$) series (black) approximations of the function
$h(x)=x-0.5$.}
\label{figure50terms}
\end{figure}

\begin{figure}[ptb]
\centering
\fbox{\includegraphics[height=3in, keepaspectratio
]
{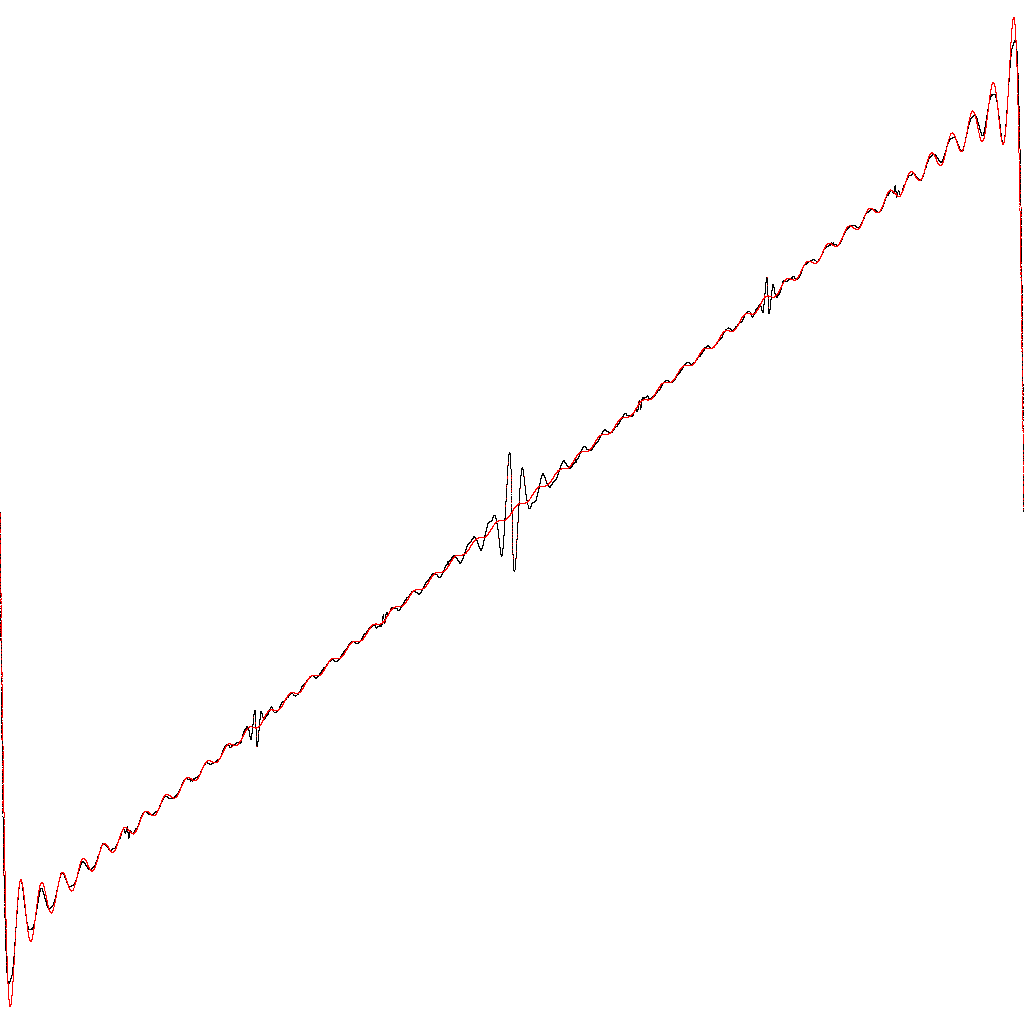}
}\caption{The sum of the first fifty terms of the Fourier (red) and the
Weierstrass Fourier ($a=0.3$) series (black) approximations of the function
$h(x)=x-0.5$.}
\label{figure50terms(point3)}
\end{figure}

\begin{example}
Other examples, using a discretized version of the theory and both theoretical
and experimental data, are reported in \cite{zhang}. In one example, a
discretized version of Example \ref{ex2} with $a=0.5$, the $L^{2}$ errors,
obtained by subsampling both the approximants and $h(x)$ at $512$ equally
spaced points, were compared: it was found that the Weierstrass Fourier series
performed slightly better than the classical Fourier series, for all partial sums
of length $l$ for $l=1,2,...,510$. In some other examples, the performance was
worse, as measured by the $L^{2}$ error.
\end{example}

\begin{remark}[Error Analysis]
In various spaces, such as $L^{2}[0,1]$, the finite Fourier sum $\sum
C_{n}c_{n}(x)+\sum S_{n}s_{n}(x)+Ee(x)$ is close to $g(x)$ if and only if the
corresponding Weierstrass Fourier sum $\sum C_{n}\widehat{c}_{n}(x)+\sum
S_{n}\widehat{s}_{n}(x)+E\widehat{e}(x)$ is close to $M_{a,b}^{-1}g(x)$. While
the errors remain under control, the smoothness of functions, as measured by
their differentiability and box-counting dimensions, can be altered. There is
a huge literature, and a good understanding, of error issues for classical
Fourier analysis. A future direction of research is to derive the Weierstrass
Fourier analogues based on the ON basis $\{\widetilde{c}_{n},\widetilde{s}
_{n}$,$\widetilde{e}\}$ instead of the classical basis $\{c_{n},s_{n},e\}$. It
may thus be possible to include deterministic roughness in approximation and
interpolation procedures.
\end{remark}


\begin{thebibliography}{99}


\bibitem {baranski}K. Bara\'{n}ski, B. B\'{a}r\'{a}ny, J. Romanowska, On the
dimension of the graph of the classical Weierstrass function, \textit{Adv.
Math. }\textbf{265} (2014), 32-59

\bibitem {baranski2}K. Bara\'{n}ski, Dimension of the graphs of the
Weierstrass-type function, (2014), Preprint, to appear in \textit{Fractal
Geometry and Stochastics V}

\bibitem {Barnsley1}M. F. Barnsley, Fractal functions and interpolation,
\textit{Constr. Approx.} \textbf{2} (1986), 303-329

\bibitem {continuations}M. F. Barnsley, A. Vince, Fractal continuation,
\textit{Constr. Approx.} \textbf{38 }(2013), 311-337

\bibitem {BU}A.S. Besicovitch and H. D. Ursell, Sets of fractional dimensions
(V): On dimensional numbers of some continuous curves, \textit{J. London Math.
Soc.} \textbf{12} (1937), 18-25

\bibitem {CK}A. K. B. Chand and P. Viswanathan, A constructive approach to
cubic Hermite Fractal Interpolation Function and its constrained aspects,
\textit{BIT Numer. Math.} \textbf{53} (2013), 841-865

\bibitem {deliu1}A. Deliu, P. Wingren, The Takagi operator, Bernoulli
sequences, smoothness contditions, and fractal curves, \textit{Proc. Am. Math.
Soc.} \textbf{121} (1994), 871-881

\bibitem {della}L. Dalla, V. Drakopoulos, M. Prodromou, On the box dimension
for a class of nonaffine fractal interpolation functions, \textit{Analysis in
Theory and Appl. }\textbf{19} (2003), 220-233

\bibitem {fatou} P. Fatou, Sur les equations fonctionnelles, {\it Bull. Math. France}, XLVIII, 261.

\bibitem {geronimo}J. S. Geronimo, D. P. Hardin, \textit{An exact formula for
the measure dimensions associated with a class of piecwise linear maps,
}\textbf{5} (1989), 89-98

\bibitem {hardin}D. P. Hardin, P. Massopust, \textit{The capacity for a class
of fractal functions, Comm. Math. Phys. }\textbf{105} (1986), 455-460

\bibitem {Hardy}G. H. Hardy, Weierstrass's non-differentiable function,
\textit{Trans. Am. Math. Soc. }\textbf{17} (1916), 301-325

\bibitem {hunt}B. R. Hunt, The Hausdorff dimension of graphs of Weierstrass
functions, \textit{Proc. Amer. Math. Soc, }\textbf{126 }(1998), 791-800

\bibitem {johnston}W. Johnston, The weighted Hermite polynomials form a basis
for $L^{2}(\mathbb{R)}$, \textit{Am. Math. Monthly, }\textbf{121 }(2014), 249-253

\bibitem {kigami2}J. Kigami, Some functional equations which generate both
crinkly broken lines and fractal curves. \textit{J. Math. Kyoto Univ.}
\textbf{27} (1987), 141--149

\bibitem {lowdine}Per-Olof L\"{o}wdine, \textit{Linear Algebra for Quantum
Theory, }Wiley-Interscience (1998)

\bibitem {Mas}P. Massopust, \textit{Interpolation and Approximation with Splines and Fractals},
Oxford University Press, New York, 2010

\bibitem {Nav1}M.A. Navascu\'{e}s and M.V. Sebasti\'{a}n, Generalization of
Hermite functions by fractal interpolation, \textit{ J. Approx. Theory,}
\textbf{131(1)} (2004), 19-29

\bibitem {Nav2}M.A. Navascu\'{e}s, Fractal polynomial interpolation.
\textit{Z. Anal. Anwend.} \textbf{24(2)} (2005), 401-418

\bibitem {manuvascues1}M.A. Navascu\'{e}s, Fractal functions of discontinuous
approximation, \textit{J. Basic and Appl. Sci., }\textbf{10} (2014), 173-176

\bibitem {Read} A. H. Read, The solutions to a functional equation, {\it Proc. Roy. Soc. of Edinburgh,
series A}, {\bf 63} (1952), 336-345.


\bibitem {thim}J. Thim, \textit{Continuous Nowhere Differentiable Functions,
}Masters Thesis (2003), Lule\aa .

\bibitem {Vis}P. Viswanathan and A.K.B. Chand, Fractal rational functions and
their approximation properties. \textit{J. Approx. Theory} \textbf{185}
(2014), 31-50

\bibitem {yamaguti}M. Yamaguti, M. Hata, J. Kigami, Mathematics of fractals,
\textit{Translations of Mathematical Monographs, \textbf{167 }}(1997),\textit{
1-78 }

\bibitem {zhang}S. Zhang, Discrete fractal Fourier transform and
experiments, ArXiv : 1502.0773v1[math.NA] 12 Feb 2015.
\end{thebibliography}
\end{document}